\theoremstyle{plain} 
\newtheorem{theorem}{Theorem}[section] %
\newtheorem{lemma}[theorem]{Lemma}
\newtheorem{proposition}[theorem]{Proposition}
\theoremstyle{definition} %
\newtheorem{remark}[theorem]{Remark}
\newcommand{\N}{\mathbb{N} }
\newcommand{\R}{\mathbb{R} }
\newcommand{\LC}{\left ( }
\newcommand{\RC}{\right ) }
\newcommand{\LD}{\left \{ }
\newcommand{\RD}{\right \} }
\newcommand{\DS}{\displaystyle }
\DeclareMathOperator*{\argmax}{argmax}
\DeclareMathOperator*{\conv}{conv}
\DeclareMathOperator*{\Proj}{Proj}
\def\address#1#2{\begingroup
\noindent\parbox[t]{7.8cm}{%
\small{\scshape\ignorespaces#1}\par\vskip1ex
\noindent\small{\itshape E-mail address}%
\/: #2\par\vskip4ex}\hfill%
\endgroup}%
\title{\uppercase{Heat equation on the hypergraph containing vertices with given data}} %
\author{Takeshi Fukao, Masahiro Ikeda, 
Shun Uchida\footnote{Corresponding author: shunuchida@oita-u.ac.jp}
\date{} %
}
\begin{document}

\maketitle

\footnote{ 
\textit{2020 Mathematics Subject Classification:}
Primary 34G25; Secondary 05C65, 47J30, 47J35.
}
\footnote{ 
\textit{Keywords:} Hypergraph, hypergraph Laplacian, subdifferential, 
nonlinear evolution equation, constraint problem.
}
%

\begin{abstract}
This paper is concerned with the Cauchy problem of a multivalued ordinary differential equation
governed by the hypergraph Laplacian,
which  describes the diffusion of ``heat'' or ``particles'' on the vertices of hypergraph.
We consider the case where the heat on several vertices 
are manipulated internally by  the observer,
namely,
are fixed by some given functions.
This situation can be reduced to a nonlinear evolution equation associated with a time-dependent subdifferential operator,
whose solvability has been investigated in numerous previous researches.
In this paper, however,  we give an alternative proof of the solvability 
in order to avoid some complicated calculations arising from the chain rule for the time-dependent subdifferential.
As for results which cannot be assured by the known abstract theory,
we also discuss the continuous dependence of solution on the given data
and the time-global behavior of solution.
\end{abstract}

\section{Introduction}

Let $V = \{ v _1 , \ldots , v_ n , v _ {n+ 1} , \ldots , v_{n+m} \} $ be a finite set,
$E \subset 2^{V }$ be a family of subsets consisting of two or more elements of $V$, and 
$w: E \to (0,\infty )$.
The triplet $G = (V ,E , w )$,
which is called {\it hypergraph},
can be interpreted as a model of a network structure 
in which vertices $v_ 1  ,\ldots , v_{n+m} \in V$ are connected by each hyperedge $e \in E$
(see Fig. \ref{Hypergraph}).

\begin{figure}[tp]
 \centering
 \includegraphics[keepaspectratio, scale=0.6]
      {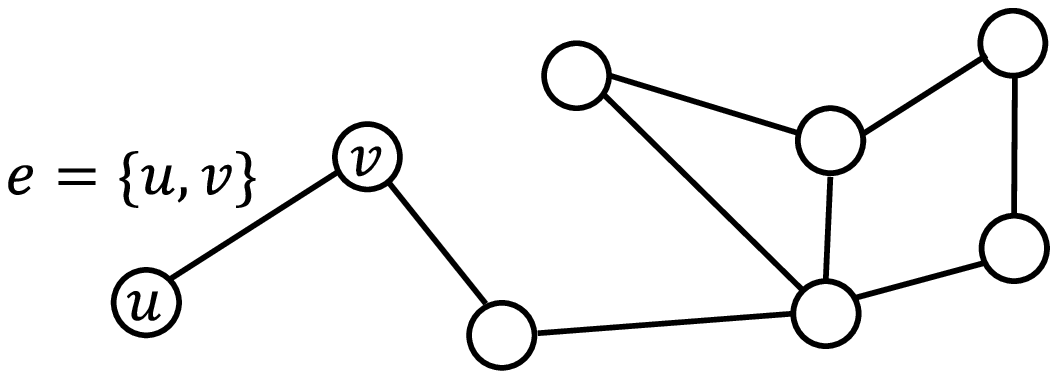}
      \hspace{4mm} 
 \includegraphics[keepaspectratio, scale=0.6]
      {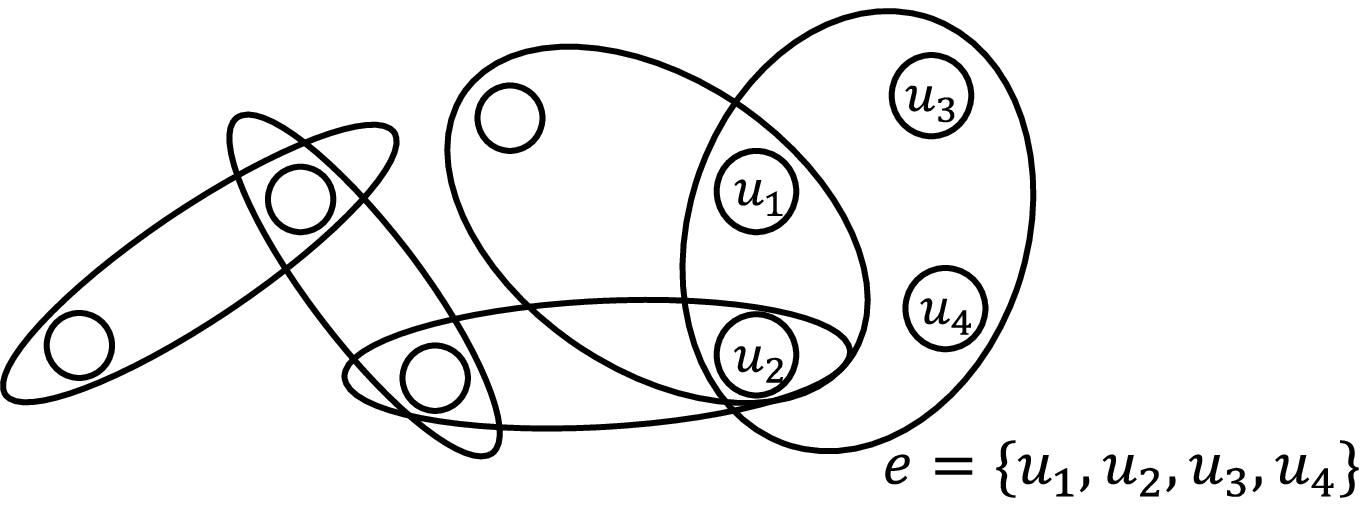}
 \caption[Usual Graph and Hypergraph]{When {\#}$e =2 $ for every $e\in E$,
each edge $e \in E $ can be regarded as a segment connecting two vertices (left figure, called {\it usual graph}).
Hypergraph is a generalization of usual graph 
which represents the connection and grouping of multiple members (right figure).
}
 \label{Hypergraph}
\end{figure}

In order to investigate the structure of network represented by a hypergraph,
Yoshida \cite{Yoshida00} introduced a nonlinear set-valued operator on $\R ^V $
(the family of mappings $x : V \to \R $)
as follows.
Define 
$ f_e (x) := \max _{u,v \in e} ( x (u ) - x(v))$
with respect to each $e \in E$
and 
\begin{equation}
\label{energy} 
\varphi _{G, p } (x)
:= \frac{1}{p} \sum_{e \in E} w (e) (f _e (x)) ^p ~~~~~p\in [1 ,\infty ) .
\end{equation}
These are convex functionals
with domains $D(f_e) = D(\varphi _{G, p }) = \R ^ V $
and then subdifferentiable at every $x \in \R ^V $.
By the chain rule
(see, e.g., \cite{CLT})
and the maximum rule 
(see, e.g., \cite[Proposition 2.54]{MN}),
the subdifferential of $\varphi _{G, p }$ coincides with 
\begin{equation}
\partial \varphi _{G, p} (x) 
= \LD \sum_{e \in E } w(e) (f_e (x) ) ^{p-1} b_e ;~b_e \in \argmax _{b \in B_e } b \cdot x  \RD ,
\label{Laplacian} 
\end{equation}
where $B _e \subset \R ^V $, 
called {\it base polytope} for $e \in E $, is defined by 
\begin{equation}
\label{BP} 
B_e := \conv  \{ 1 _u - 1 _ v ;~u,v \in e  \} 
\end{equation}
and $1 _ u : V \to \R $ with $ u \in V $ stands for 
\begin{equation}
\label{1u} 
1 _ u (v):
=
\begin{cases}
~~1 ~~&~~\text{ if } u =v , \\
~~0~~&~~\text{ if } u \neq v .
\end{cases}
\end{equation}
This operator $ L _{G ,p} = \partial \varphi _{G, p}  : \R ^V \to 2 ^{\R ^V } $
is called {\it hypergraph ($p$-)Laplacian}, where $1 \leq p < \infty $ (see also \cite{I-U}).

When $p =2$ and every $e \in E $ consists of two elements of $V$,
then $ L_{G , 2} $ becomes a linear operator on $\R ^V $,
i.e., a square matrix of order ${\#}V =  (n+m)$
and represents the movement of ``particles'' on a vertex to another adjacent vertex  through an edge $e\in E$ 
in the random walk model on the graph.
By analogy with this,
we can regard the ODE $x'(t) + L_{G, p} (x(t) ) \ni 0 $
with respect to $x : [0,T ] \to \R ^V $
as a diffusion model of ``heat'' or ``particles'' on each vertex $v_ i$, which is described by 
$x(t) (v _ i ) = x_ i (t)$.
In fact, 
the time-global behavior of solution to this ODE (studied in \cite{I-U})
quite resembles those to 
the PDE $\partial _t u - \nabla \cdot (| \nabla u | ^{p-2} \nabla u )  = 0 $.
In information science,
this ODE $x'(t) + L_{G, p} (x(t) ) \ni 0 $
is used to investigate the eigenvalues of  $L_{G, p}$,
prove the Cheeger like inequality, and 
analyze the cluster structure of network represented by hypergraph
(see,  e.g., \cite{FSY,IMTY,L-M,TMIY}).

Here let the heat on two vertices $v_ i , v_ j$
be fixed as $x (v _i ) = \alpha $ and $x (v _j ) = \beta   $ ($\alpha  > \beta $).
Under this situation,
the final state $x _{\infty } = \lim_{t \to \infty } x(t) $
of the diffusion process $x'(t) + L_{G, p} (x(t) ) \ni 0 $
might describe the thermal gradient in each path and 
then indicate the optimal path of heat flux from vertex $v _i $ to $v _j $.
As a generalization of this problem, 
we deal with the case where 
the heat at vertices $v_{n+1} , \ldots , v_{n+m }$
are determined by some given functions $a _j : [0,T ] \to \R $ ($j =1 ,\ldots ,m $),
i.e., the heat equation governed by $L_{G, p}$ under the condition
\begin{equation}
\label{constraint} 
x_{n+j} (t) = x (t) (v_{n+ j } ) = a _ j (t) ~~~~~j=1,\ldots ,m
\end{equation}
(see Fig. \ref{Givendata}).
This can be interpreted as a situation
where the observer internally manipulates the heat of the network.
As another example, the graph Laplacian \eqref{Laplacian}
and its energy functional \eqref{energy} on the usual graph
can be found in the definition of Laplacian on the fractal 
(see, e.g., \cite{K89,K01}). 
In this case, 
the assumption $a _ j \equiv 0$ 
corresponds to a homogeneous Dirichlet boundary condition on the fractal.

\begin{figure}[tp]
 \centering
 \includegraphics[keepaspectratio, scale=0.6]
      {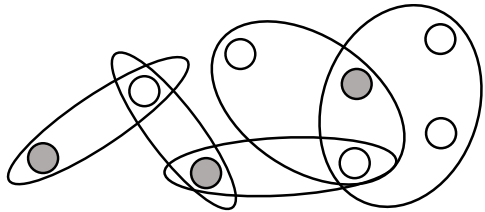}
 \caption[Given data]{We consider the case where 
the ``heat'' of some vertices (colored in the above) is given as $x _{n + i } (t) = a _i (t)$.
}
 \label{Givendata}
\end{figure}

We here state the setting of our problem more precisely.
Without loss of generality, we assume the hypergraph $G =(V ,E ,w)$ is {\it connected},
i.e.,  for every $u ,v \in V$ there exist some
$ u _ 1 , \ldots u _ {N-1} \in V $ and $ e _1 , e _2 , \ldots e_N \in E$ such that
$u_{j-1} , u _j \in e_j$ holds for any $j=1, 2, \ldots , N$, 
where $u_ 0 = u $ and $ u_N = v$
(if $G$ is disconnected,
we only have to divide $G$ into connected components and
consider the problem on each component, see \cite{I-U}).

By letting $ x_ i := x (v _ i ) $,
we can identify $\R ^ V $ (the family of mappings from $V$ onto $\R $)
with the Euclidean space $\R ^{n + m}$
(\eqref{1u} coincides with a unit vector of the canonical basis).
In this sense, $\R ^V $ is a Hilbert space 
endowed with the standard inner product 
$x \cdot y := \sum_{v \in V} x(v) y(v) $ and the norm $\| x \| := \sqrt{\sum_{v \in V } x(v) ^2 }  $.
For $x : [0,T ] \to \R ^V $, we shall write $x_ i(t):= x (t) (v_ i) $ for the same reason.

For convenience,
the given data in \eqref{constraint} will be unified by $a : [0 , T ] \to \R  ^V $ as
\begin{equation}
\label{a} 
a (t) := ( 0, \ldots , 0 , a_1 (t) , \ldots , a_ m (t)) . 
\end{equation}
Let  $K_a (t) \subset \R ^V $ be a constraint set which corresponds to the condition \eqref{constraint}
such that 
the former $n$ components $x_ 1 ,\ldots , x_n $ are chosen freely 
and the latter $m$ components $x_ {n+ 1} ,\ldots , x_{n + m } $ are fixed
by given data $a_ 1 (t ) , \ldots , a_m (t)$.
Namely, define
\begin{equation*}
K_a (t) := \{ x \in \R ^V ;~x= (x _1 , \ldots , x _n , a_1 (t) ,\ldots , a_m (t)  ), ~x_i \in \R ~(i =1,\ldots ,n ) \} .
\end{equation*}
Note that $K_a (t)$ is a closed convex subset in $\R ^V $
and the projection onto $K_a (t)$ can be defined by 
\begin{equation}
\label{proj}
\text{Proj} _{K_a (t) } x := (x_1 ,\ldots , x_ n , a _1 (t) , \ldots ,  a_ m (t)),
~~~\text{where ~}
x =(x_ 1 ,\ldots ,x _{n+m}) .
\end{equation}
Moreover,  let 
$I_{K_a (t)} : \R ^V \to [0 , \infty ]$ stand for the indicator on $K _a (t) \subset \R ^V $:
\begin{equation}
\label{Indi} 
I_{K_a (t)} (x)=\begin{cases}
~~0 ~~&~~\text{ if } x \in K_a  (t) , \\
~~ + \infty ~~&~~\text{ otherwise.}
\end{cases}
\end{equation}
We can define the subdifferential of $I_{K_a (t)} $ on $  K_a (t)$.
Here when $\psi : \R ^V \to ( - \infty , + \infty ]$
is a proper ($\psi \not \equiv + \infty $) lower semi-continuous and convex function,
its subgradient at $x \in \R ^V $ is defined by 
\begin{equation}
\label{sub}
\partial \psi (x) := \{ \eta \in \R ^V ;~ 
\eta \cdot (z - x ) \leq \psi (z) - \psi (x)
 ~~\forall z \in \R ^ V  \} .
\end{equation}
Then the diffusion process on the hypergraph with the condition \eqref{constraint} and the initial state $ x _ 0 \in \R ^ V $
can be represented by the following Cauchy problem of an evolution equation with constraint condition:
\begin{equation*}
\text{(P)}_{a , h , x _ 0 }
~~~~~~~
\begin{cases}
~~\DS x'(t) + \partial \varphi _{G, p } (x (t)) + \partial I_{K _a (t )} (x (t) ) \ni h (t) ~~~t \in [0, T],  \\
~~x(0) = x_ 0 ,
\end{cases}
\end{equation*}
where $h : [0,T ] \to \R ^ V $ is the given external force.

Since 
$\varphi _{G, p } (x) < \infty $ holds for every $x \in \R^ V $ (i.e., $D ( \varphi _{G, p }) = \R ^V $),
we have 
$\partial ( \varphi_{G, p }  + I_{K_a (t)} ) = \partial  \varphi_{G, p }  +\partial   I_{K_a (t)}$.
Therefore (P)$_{a , h , x _ 0 }$ can be reduced to a problem of evolution equation
governed by a subdifferential operator of time-dependent functional 
$ \varphi ^t : = \varphi_{G, p }  +  I_{K_a (t)} $,
which has been investigated in numerous papers, e.g., \cite{F-K,F-M-K,I-Y-K,Ken,S-I-Y-K}.
In this paper,
however,
we aim to introduce a simpler method for the constraint problem (P)$_{a , h , x _ 0 }$
by focusing on features of $\partial  \varphi _{G, p }$ and $\partial   I_{K _a (t)} $ stated in the next section.
We prove  the existence of solution to (P)$_{a , h , x _ 0 }$ in \S 3.
As for a result which cannot be assured by the known abstract theory,
we discuss the continuous dependence of solution on the given data $a (t)$ in \S 4.
Finally, we consider the time-global behavior of solution in \S 5
by using the facts given in previous parts.

\section{Preliminary}
In this section, we check and review some basic facts for later use. 
First, the subgradient of \eqref{Indi} can be written specifically as follows:
\begin{lemma}
\label{Ind-sub}
For any $t \in [0,T]$ and $x \in  K_a (t)$, 
the subdifferential of $I _{K_ a (t)}$ is characterized by 
\begin{equation*}
\partial I _{K_ a (t)} (x) =\{ (0 \ldots , 0 , \xi _{n+1} , \ldots , \xi _{n+m}) \in \R^ V , ~ \xi _ {n+j } \in \R ~ ~(j =1 ,\ldots , m) \} .
\end{equation*}
\end{lemma}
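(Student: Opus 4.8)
The plan is to compute the subdifferential directly from its definition \eqref{sub}, exploiting the fact that $K_a(t)$ is not merely convex but an affine subspace. First I would note that, since $x \in K_a(t)$, we have $I_{K_a(t)}(x) = 0$, so that $\eta \in \partial I_{K_a(t)}(x)$ means
\[
\eta \cdot (z - x) \le I_{K_a(t)}(z) \quad \text{for all } z \in \R^V.
\]
The inequality holds automatically whenever $z \notin K_a(t)$, because the right-hand side is then $+\infty$; hence the condition reduces to
\[
\eta \cdot (z - x) \le 0 \quad \text{for all } z \in K_a(t).
\]

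Next I would observe that, for $z \in K_a(t)$, the difference $z - x$ has vanishing last $m$ components (both $z$ and $x$ share the fixed values $a_1(t), \ldots, a_m(t)$ there), and that as $z$ ranges over $K_a(t)$ the vector $z - x$ ranges over the entire linear subspace $W := \text{span}\{1_{v_1}, \ldots, 1_{v_n}\}$ spanned by the first $n$ canonical basis vectors. This is the key point: because $K_a(t)$ is an affine subspace, the set of admissible displacements is a genuine linear subspace rather than a one-sided cone.

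Since $W$ is a subspace, both $w$ and $-w$ lie in $W$ for every $w \in W$, so the inequality $\eta \cdot w \le 0$ for all $w \in W$ forces $\eta \cdot w = 0$ for all $w \in W$, i.e.\ $\eta \in W^\perp$. As $W^\perp = \{(0, \ldots, 0, \xi_{n+1}, \ldots, \xi_{n+m})\}$, this yields the inclusion $\subset$. The reverse inclusion is immediate: any $\eta$ whose first $n$ components vanish satisfies $\eta \cdot (z - x) = 0$ for every $z \in K_a(t)$, whence $\eta \in \partial I_{K_a(t)}(x)$.

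There is no serious obstacle here; the only thing to be careful about is the passage from the one-sided inequality to the two-sided equality, which is precisely where the affine (rather than merely convex) structure of $K_a(t)$ is used. Everything else is a direct unwinding of the definition \eqref{sub}.
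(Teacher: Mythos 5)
Your proof is correct and follows essentially the same route as the paper's: both reduce the subdifferential inequality \eqref{sub} to $\eta \cdot (z-x) \le 0$ for $z \in K_a(t)$, observe that the admissible displacements $z-x$ form the linear span of $1_{v_1},\ldots,1_{v_n}$, and conclude that the first $n$ components of $\eta$ must vanish while the last $m$ are free. Your write-up is slightly more explicit than the paper's (spelling out the sign-flip argument that upgrades the one-sided inequality to orthogonality, and verifying the reverse inclusion separately), but the underlying argument is identical.
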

\begin{proof}
Let $\xi \in \partial I _{K_a (t)} (x)$. By the definition of subdifferntial \eqref{sub},
$0 \geq \xi \cdot (z - x)$
holds for every $z \in K _a  (t)$.
When  $x , z \in K _a (t)$, we have 
\begin{align*}
z - x &= (z _1 -x_1 , \ldots , z_ n - x_n , a_1 (t) - a_1 (t)  , \ldots , a_m (t) -a_m (t) ) \\
 &= (z _1 -x_1 , \ldots , z_ n - x_n , 0  , \ldots , 0 ) . 
\end{align*}
Hence it is necessary that
$0 \geq  \sum_{i=1}^{n} \xi _i  (z_i  - x _ i )$ holds 
for any $z_ i \in \R  $ ($i= 1, \ldots ,n $).
This implies that
$\xi _1 , \ldots ,  \xi_ n $ should be equal to zero  and 
$\xi _{n+1} ,\ldots , \xi _{n+m} $ can be chosen arbitrarily.
\end{proof}

We next check the following Poincar\'{e} type inequality
by repeating the argument in \cite{I-U}.
Here and henceforth, $\varphi _{G, p}$ in \eqref{energy} will be abbreviated to $\varphi $.
\begin{theorem}
\label{Poincare} 
Let the hypergraph $G =(V ,E ,w)$ be connected and $ x \in K _a (t)$.
Then there exists some constant $C$
depending only on $ p$ and $G $ such that 
\begin{equation}
\label{P-ineq} 
\sum_{k=1}^{n} |x_ k |  \leq C \varphi (x) ^{1/ p }+n \min _{1 \leq i \leq m} |a _ i (t) | .
\end{equation}
\end{theorem}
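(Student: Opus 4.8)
The plan is to exploit the fact that the energy $\varphi(x)$ controls, edge by edge, the oscillation of $x$ across each hyperedge, and then to propagate this control from the free vertices $v_1,\ldots,v_n$ to the fixed vertices by walking along paths in the connected hypergraph $G$. First I would observe that for every $e\in E$ the single nonnegative term $w(e)(f_e(x))^p$ is dominated by $p\varphi(x)$, whence
\[
f_e(x)\leq\left(\frac{p}{w(e)}\right)^{1/p}\varphi(x)^{1/p}\leq C_0\,\varphi(x)^{1/p},\qquad C_0:=\max_{e\in E}\left(\frac{p}{w(e)}\right)^{1/p},
\]
where $C_0$ is finite (since $E$ is finite and $w(e)>0$) and depends only on $p$ and $G$. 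Because $f_e(x)=\max_{u,v\in e}(x(u)-x(v))$ bounds $|x(u)-x(v)|$ for every pair $u,v$ lying in a common edge $e$, this already estimates the variation of $x$ along any single edge.

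Next I would use connectedness. Let $i^\ast$ be an index realizing $\min_{1\leq i\leq m}|a_i(t)|$ and fix the corresponding vertex $v_{n+i^\ast}$, whose value under $x\in K_a(t)$ equals $a_{i^\ast}(t)$. For each free vertex $v_k$ ($1\leq k\leq n$), connectedness of $G$ provides a chain $v_k=u_0,u_1,\ldots,u_{N_k}=v_{n+i^\ast}$ in which consecutive vertices $u_{j-1},u_j$ lie in a common edge $e_j\in E$. Telescoping via the triangle inequality and applying the edge estimate yields
\[
|x_k-a_{i^\ast}(t)|\leq\sum_{j=1}^{N_k}|x(u_j)-x(u_{j-1})|\leq\sum_{j=1}^{N_k}f_{e_j}(x)\leq N_k\,C_0\,\varphi(x)^{1/p}.
\]
Choosing each chain to be a shortest one, $N_k$ is bounded by the diameter $D$ of $G$ (a quantity intrinsic to $G$), so that $|x_k|\leq|a_{i^\ast}(t)|+DC_0\,\varphi(x)^{1/p}$ for every $k$.

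Finally I would sum over $k=1,\ldots,n$ and recall that $|a_{i^\ast}(t)|=\min_{1\leq i\leq m}|a_i(t)|$, obtaining
\[
\sum_{k=1}^{n}|x_k|\leq n|a_{i^\ast}(t)|+nDC_0\,\varphi(x)^{1/p}=C\,\varphi(x)^{1/p}+n\min_{1\leq i\leq m}|a_i(t)|,
\]
with $C:=nDC_0$ depending only on $p$ and $G$. I do not expect a genuinely hard step here: both the estimate of $f_e$ by $\varphi^{1/p}$ and the telescoping are routine. The point requiring the most care is to verify that $C$ can be chosen independently of $t$ and of $x$. Although the target vertex $v_{n+i^\ast}$, and hence the connecting paths, may vary with $t$, the number of edges in a shortest path and the edge weights are features of the fixed hypergraph $G$; thus the diameter bound $D$ and the weight factor $C_0$ absorb all such dependence, and the argument goes through uniformly.
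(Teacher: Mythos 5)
Your proof is correct and follows essentially the same route as the paper: fix the index realizing $\min_{1\leq i\leq m}|a_i(t)|$, connect each free vertex to the corresponding fixed vertex by a chain of hyperedges (connectedness), telescope the difference, and bound the edge oscillations $f_{e_l}(x)$ by $\varphi(x)^{1/p}$. The only difference is in the last estimate and is immaterial: the paper bounds the path sum by $\sum_{e\in E}f_e(x)$ and applies H\"{o}lder's inequality, whereas you bound each $f_e(x)$ termwise by $(p/w(e))^{1/p}\varphi(x)^{1/p}$ and multiply by the diameter of $G$; both yield a constant depending only on $p$ and $G$.
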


\begin{proof}

Let the index $j $ satisfy $|a _j (t)| = \min_{1\leq i\leq m} |a _i (t)|$.
Since $G$ is assumed to be connected,
for every fixed $v _k  $ ($k=1,\ldots ,n $)
there exist some 
$ u _ 1 , \ldots , u _ {N-1} \in V $ and $ e _1 , e _2 , \ldots , e_N \in E$ s.t.
$u_{l -1} , u _l  \in e_l$ ($l =1, 2, \ldots , N$) holds, 
where $u_ 0 = v _ k  $ and $ u_N = v_{n + j }$.
By $x(v _{n+j }) = a_ j (t)$ and H\"{o}lder's inequality,
we have  
\begin{align*}
|x (v _ k ) - a _ j (t) | 
&\leq \sum_{l =1}^{N} |x(u _ {l -1 } ) - x( u_{l} ) | \leq \sum_{l =1}^{N} f _{e _{l} } (x)
\leq \sum_{e \in E } f _{e  } (x) \\
&  \leq \frac{ ( {\#} V ) ^{1 / p' }}{ ( \min _{e \in E } w (e) ) ^{1/p}} \varphi (x )  ^{1/p} ,
\end{align*}
where $p' := p / (p-1 )$ is the H\"{o}lder conjugate exponent. 
Then we obtain \eqref{P-ineq}
with the positive constant $C =n (n+m) ^{1/ p' }  ( \min _{e \in E } w (e) ) ^{ - 1/p}$.
\end{proof}

Note that (see \cite[Theorem 2.4]{I-U})
\begin{equation}
\label{minimizer}
\varphi (y) = 0 ~~\Leftrightarrow ~~\exists c \in \R ~~\text{s.t.} ~~ y = (c , \ldots , c )  .  
\end{equation}
In addition, we can easily show that 
$f_e (x) = \max _{ u, v \in e} |x(u) - x(v)| \leq 2 \| x \| $
by definition
and  $ \| b \| \leq 2$ for any $b \in  B_e$
 since $1 _ u \in \R ^V $ can be identified with a unit vector of $\R ^{n+m }$.
Hence as for the boundedness of $\varphi $ and $\partial \varphi $,
\begin{equation}
\label{bound}
\varphi (x)\leq \frac{ 2 ^p  ({\#}E)   \max _{e\in E } w (e) }{p} \| x \| ^p , 
\hspace{5mm} 
\| \eta \| \leq 
2 ^p   ({\#}E)  \max _{e\in E } w (e)  \| x \| ^{p-1 } 
\end{equation}
hold for every $ x \in \R ^V$ and $\eta \in \partial \varphi (x)$, 
where ${\#}E$ is the number of hyperedges.

We also recall a variant of Gronwall's inequality as follows (see \cite[Lemme A.5]{Bre}).
Let
$L^q (0 ,T ; \R ^V )$ ($q \in [1, \infty )$)
and 
$L^{\infty } (0 ,T ; \R ^V )$
 denote the family of $g : [0,T] \to \R ^V$
which satisfies   
$ \int_{0}^{T} \| g(t) \| ^q  dt < \infty $ and 
$\sup _{0 \leq t \leq T }  \| g(t) \|  < \infty $, respectively.
Moreover, 
$g \in W ^{1, q} (0,T ; \R ^V )$ implies 
that $ g $ and its derivative $g'$ belong to $L^q (0 ,T ; \R ^V )$.
\begin{lemma}
\label{Gronwall} 
Let $\Xi  : [0, T ] \to \R $ be continuous 
and satisfy 
\begin{equation*}
\frac{1}{2} \Xi  (t) ^2 \leq \frac{1}{2} \kappa ^2 + \int_{0}^{t}   g(s) \Xi  (s) ds  ~~~\forall t \in [0,T]
\end{equation*}
with some  constant  $\kappa $ and non-negative function $g \in L^1 (0,T ; \R )$.
Then $\Xi  $ satisfies 
\begin{equation*}
|\Xi  (t) |  \leq | \kappa | + \int_{0}^{t}   g(s) ds  ~~~\forall t \in [0,T] .
\end{equation*}
\end{lemma}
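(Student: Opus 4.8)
The plan is to run the classical regularization-and-square-root argument adapted to an integral term that is linear, rather than quadratic, in the unknown. First I would introduce, for each $\varepsilon > 0$, the auxiliary function
\[
\Phi _\varepsilon (t) := \frac{1}{2} ( \kappa ^2 + \varepsilon ) + \int_{0}^{t} g(s) \Xi (s) \, ds ,
\]
and record two preliminary facts. Since $\Xi$ is continuous on the compact interval $[0,T]$ it is bounded, and $g \in L^1 (0,T ; \R )$, so the integrand $g \Xi$ is integrable; hence $\Phi _\varepsilon$ is absolutely continuous with $\Phi _\varepsilon ' (t) = g(t) \Xi (t)$ for a.e.\ $t$. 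Moreover the hypothesis gives $\tfrac{1}{2} \Xi (t) ^2 \leq \Phi _\varepsilon (t) - \tfrac{\varepsilon}{2}$, so in particular $\Phi _\varepsilon (t) \geq \tfrac{\varepsilon}{2} > 0$ for every $t$; this strict positivity is exactly what will let me take square roots and divide safely.

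Next I would estimate the derivative. Using $g \geq 0$ and $\Xi \leq | \Xi |$ together with $| \Xi (t) | \leq \sqrt{2 \Phi _\varepsilon (t)}$ (which follows from $\tfrac{1}{2} \Xi (t) ^2 \leq \Phi _\varepsilon (t)$), I obtain
\[
\Phi _\varepsilon ' (t) = g(t) \Xi (t) \leq g(t) | \Xi (t) | \leq g(t) \sqrt{2 \Phi _\varepsilon (t)} \qquad \text{for a.e. } t .
\]
Dividing by $\sqrt{2 \Phi _\varepsilon (t)} > 0$ and identifying the left-hand side as a derivative, this reads
\[
\frac{d}{dt} \sqrt{2 \Phi _\varepsilon (t)} = \frac{ \Phi _\varepsilon ' (t) }{ \sqrt{2 \Phi _\varepsilon (t)} } \leq g(t) .
\]
Integrating over $[0,t]$ and using $\sqrt{2 \Phi _\varepsilon (0)} = \sqrt{\kappa ^2 + \varepsilon}$ yields $\sqrt{2 \Phi _\varepsilon (t)} \leq \sqrt{\kappa ^2 + \varepsilon} + \int_{0}^{t} g(s)\, ds$. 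Combining this with $| \Xi (t) | \leq \sqrt{2 \Phi _\varepsilon (t)}$ and letting $\varepsilon \to 0^+$ gives $| \Xi (t) | \leq | \kappa | + \int_{0}^{t} g(s)\, ds$, which is the claim.

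The step I expect to be the main (though mild) obstacle is the rigorous justification of the chain rule $\frac{d}{dt}\sqrt{2\Phi _\varepsilon} = \Phi _\varepsilon '/\sqrt{2\Phi _\varepsilon}$ and of the subsequent integration: because $\Phi _\varepsilon$ is only absolutely continuous and not $C^1$, one must argue through the fundamental theorem of calculus for absolutely continuous functions. This is legitimate precisely because $x \mapsto \sqrt{2x}$ is Lipschitz on $[\varepsilon/2 , \infty)$, so the strict lower bound $\Phi _\varepsilon \geq \varepsilon/2$ keeps $t \mapsto \sqrt{2\Phi _\varepsilon (t)}$ absolutely continuous. The $\varepsilon$-regularization is introduced solely to secure this positivity; once it is in place, every remaining estimate is elementary.
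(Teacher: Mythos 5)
Your proof is correct and coincides in substance with the argument the paper relies on: the paper does not prove Lemma \ref{Gronwall} itself but cites \cite[Lemme A.5]{Bre}, whose classical proof is exactly this $\varepsilon$-regularization of the constant, the bound $|\Xi(t)| \leq \sqrt{2\Phi_\varepsilon(t)}$, differentiation of $t \mapsto \sqrt{2\Phi_\varepsilon(t)}$, and integration. Your justification of the chain rule via the Lipschitz continuity of $x \mapsto \sqrt{2x}$ on $[\varepsilon/2,\infty)$ and the fundamental theorem of calculus for absolutely continuous functions is precisely the point the regularization is designed to secure, so nothing needs to be corrected.
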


\section{Existence of Solution}

We begin with the solvability of (P)$_{a , h , x_ 0 }$: 
\begin{theorem}
\label{Existence} 
Let 
$T \in (0, \infty  )$ and $a\in W ^{1,2} (0, T ; \R ^V )$.
Then for any $x_ 0 \in K_a (0)$ and $h\in L ^{2} (0, T ; \R ^V )$, 
\rm{(P)}$_{a , h , x _ 0 }$ possesses
 a unique solution $x \in W ^{1,2 } (0,T ; \R ^V )$. 
\end{theorem}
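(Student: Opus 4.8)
The plan is to exploit the special structure identified in Section 2 rather than to invoke the general theory of time-dependent subdifferentials. The key observation is that, by Lemma \ref{Ind-sub}, $\partial I_{K_a(t)}(x)$ consists precisely of the vectors supported on the last $m$ coordinates, \emph{independently} of $t$ and of $x\in K_a(t)$. I would therefore begin with a change of unknown that freezes the moving constraint: setting $u(t):=x(t)-a(t)$, the condition $x(t)\in K_a(t)$ becomes $u(t)\in K_0$, where $K_0:=\{(u_1,\dots,u_n,0,\dots,0)\}$ is a \emph{fixed} subspace of $\R^V$, and $\partial I_{K_a(t)}(x)=\partial I_{K_0}(u)=K_0^{\perp}$. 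Writing $P$ for the orthogonal projection onto $K_0$ and applying $P$ to the inclusion annihilates the constraint multiplier (which lies in $K_0^{\perp}$), reducing (P)$_{a,h,x_0}$ to
\begin{equation*}
u'(t)+P\eta(t)=P\bigl(h(t)-a'(t)\bigr),\qquad \eta(t)\in\partial\varphi\bigl(u(t)+a(t)\bigr),\qquad u(t)\in K_0 .
\end{equation*}
Conversely, any solution of this reduced equation yields a solution of (P) via $x=u+a$ and the multiplier $\xi(t):=(I-P)\bigl(h(t)-a'(t)-\eta(t)\bigr)\in K_0^{\perp}$; moreover $u'(t)\in K_0$ forces the constraint $u(t)\in K_0$ to be preserved from $u(0)=x_0-a(0)\in K_0$.

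The gain is that on the fixed space $K_0$ the only remaining $t$-dependence sits inside $\varphi(\,\cdot\,+a(t))$, and $\varphi$ has full domain $\R^V$ and is locally Lipschitz, so no delicate moving-domain chain rule is needed. I would construct approximate solutions through the Moreau--Yosida regularization $\varphi_\lambda$ (whose gradient $\nabla\varphi_\lambda$ is globally Lipschitz), solving the Carath\'eodory ODE
\begin{equation*}
u_\lambda'(t)+P\,\nabla\varphi_\lambda\bigl(u_\lambda(t)+a(t)\bigr)=P\bigl(h(t)-a'(t)\bigr),\qquad u_\lambda(0)=x_0-a(0) ,
\end{equation*}
which admits a unique absolutely continuous solution on $[0,T]$; an implicit Euler scheme using the resolvent of $\partial\varphi$ would serve equally well. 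As the right-hand side lies in $K_0$, we have $u_\lambda(t)\in K_0$ for all $t$.

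The crux is a set of a priori estimates uniform in $\lambda$. Testing the approximate equation with $u_\lambda'$ and using $u_\lambda'\in K_0$ gives, with $g:=P(h-a')$, the energy identity
\begin{equation*}
\int_0^t\|u_\lambda'\|^2\,ds+\varphi_\lambda\bigl(u_\lambda(t)+a(t)\bigr)=\varphi_\lambda(x_0)+\int_0^t\bigl(g\cdot u_\lambda'+\nabla\varphi_\lambda(u_\lambda+a)\cdot a'\bigr)\,ds .
\end{equation*}
To close it I would control $\|u_\lambda(t)\|$ by the Poincar\'e-type inequality of Theorem \ref{Poincare} applied to $x=u_\lambda+a\in K_a(t)$ (whose first $n$ entries are exactly those of $u_\lambda$), bound $\varphi_\lambda\le\varphi$ and $\|\nabla\varphi_\lambda\|$ via \eqref{bound}, and absorb the resulting terms through the Gronwall inequality of Lemma \ref{Gronwall}. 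This yields bounds on $\|u_\lambda\|_{L^\infty(0,T;\R^V)}$, $\|u_\lambda'\|_{L^2(0,T;\R^V)}$ and $\sup_t\varphi(u_\lambda(t)+a(t))$ independent of $\lambda$. Since the setting is finite-dimensional, Arzel\`a--Ascoli and weak compactness in $L^2$ give a subsequence with $u_\lambda\to u$ uniformly and $u_\lambda'\rightharpoonup u'$ in $L^2$.

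The main obstacle, as in every subdifferential evolution problem, is the limit passage in the nonlinear term: I must show that the weak limit $\eta$ of $\eta_\lambda:=\nabla\varphi_\lambda(u_\lambda+a)$ satisfies $\eta(t)\in\partial\varphi(u(t)+a(t))$ a.e. Since $\eta_\lambda\in\partial\varphi\bigl(J_\lambda(u_\lambda+a)\bigr)$ with $J_\lambda(u_\lambda+a)\to u+a$ (the $\eta_\lambda$ being bounded), this is the demiclosedness of the maximal monotone operator $\partial\varphi$, handled by the standard $\limsup$ argument: one proves $\limsup_\lambda\int_0^T\eta_\lambda\cdot(u_\lambda+a)\,dt\le\int_0^T\eta\cdot(u+a)\,dt$ (converting $\int\eta_\lambda\cdot u_\lambda'$ via the energy identity) and combines it with monotonicity of $\partial\varphi$ and lower semicontinuity of $\varphi$. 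Passing to the limit then shows $x=u+a$ solves (P)$_{a,h,x_0}$, with $x\in W^{1,2}(0,T;\R^V)$ from the uniform $W^{1,2}$ bound. Uniqueness is comparatively routine: for two solutions $x_1,x_2$ with identical data, the difference satisfies $x_1-x_2\in K_0$ for each $t$, so the constraint multipliers pair to zero against $x_1-x_2$; testing the difference of the equations with $x_1-x_2$ and using monotonicity of $\partial\varphi$ gives $\frac{d}{dt}\|x_1-x_2\|^2\le0$, whence $x_1\equiv x_2$ from $x_1(0)=x_2(0)$.
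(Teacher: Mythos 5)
You take a genuinely different route from the paper, and its architecture is sound. The paper keeps $\partial\varphi$ exact and penalizes the \emph{constraint}, replacing $\partial I_{K_a(t)}$ by its Yosida approximation $\frac{1}{\lambda}(x-\Proj_{K_a(t)}x)$, so that the constraint and the multiplier $\xi$ are recovered only in the limit $\lambda\to 0$ through a variational-inequality argument. You instead eliminate the constraint exactly from the outset: the shift $u=x-a$ turns the moving set $K_a(t)$ into the fixed subspace $K_0$, the projection $P$ annihilates the multiplier (note $Pa'=0$, so your forcing $P(h-a')$ is simply $Ph$), and you regularize $\varphi$ rather than the indicator. Your recovery of the multiplier $\xi=(I-P)(h-a'-\eta)\in K_0^{\perp}=\partial I_{K_a(t)}(x)$ is correct, your demiclosedness argument is standard, and your uniqueness proof coincides with the paper's. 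What your approach buys is that the constraint holds exactly along the whole approximation; what it costs is that the $t$-dependence now sits inside $\varphi(\cdot+a(t))$, so the a priori estimates are entirely your responsibility.

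That is where there is a genuine gap. You propose to get the uniform $L^\infty$ bound on $u_\lambda$ by combining the energy identity (testing with $u_\lambda'$) with Theorem \ref{Poincare}. This does not close as written, for two reasons. First, Theorem \ref{Poincare} needs an \emph{upper} bound on $\varphi(u_\lambda+a)$, whereas your energy identity controls only $\varphi_\lambda(u_\lambda+a)$; the inequality $\varphi_\lambda\le\varphi$ that you invoke goes in the wrong direction (an upper bound for $\varphi_\lambda$ gives no upper bound for $\varphi$). Second, the right-hand side of the energy identity contains $\int_0^t\nabla\varphi_\lambda(u_\lambda+a)\cdot a'\,ds$, and the only available estimate for $\|\nabla\varphi_\lambda(u_\lambda+a)\|$ is the growth bound $C\|u_\lambda+a\|^{p-1}$ from \eqref{bound}, i.e.\ it presupposes the very sup-norm bound you are trying to derive. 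One can try to repair the first point by the correction $\varphi(z)\le\varphi_\lambda(z)+\lambda C^2\|z\|^{2(p-1)}$ (which follows from \eqref{bound} and $\|z-J_\lambda z\|=\lambda\|\nabla\varphi_\lambda(z)\|$), but for $p>2$ the exponent $2(p-1)>p$ makes this term supercritical, and closing the estimate then requires a continuity/bootstrap argument in $\lambda$ --- none of which is ``absorbing the terms through Lemma \ref{Gronwall}''.

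The clean fix stays entirely inside your framework and is exactly the paper's estimate scheme transplanted to the reduced equation: test first with $u_\lambda$ itself (legitimate since $u_\lambda(t)\in K_0$), not with $u_\lambda'$. Convexity of $\varphi_\lambda$ gives $\nabla\varphi_\lambda(u_\lambda+a)\cdot u_\lambda\ge\varphi_\lambda(u_\lambda+a)-\varphi_\lambda(a)\ge-\varphi(a)$, hence $\frac12\frac{d}{dt}\|u_\lambda\|^2\le\varphi(a(t))+\|h(t)\|\,\|u_\lambda(t)\|$, and Lemma \ref{Gronwall} yields $\sup_t\|u_\lambda(t)\|\le C$ independently of $\lambda$; no Poincar\'e inequality is needed anywhere in the existence proof (the paper uses Theorem \ref{Poincare} only for the asymptotics in Section 5). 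Then \eqref{bound} transfers to the Yosida approximation, since $\nabla\varphi_\lambda(z)\in\partial\varphi(J_\lambda z)$ and $\|J_\lambda z\|\le\|z\|$ (because $J_\lambda 0=0$, as $0$ minimizes $\varphi$), so $\sup_t\|\nabla\varphi_\lambda(u_\lambda+a)\|$ and $\sup_t\varphi(u_\lambda+a)$ are bounded; only after that does your energy identity deliver the uniform $W^{1,2}$ bound. With the estimates reordered in this way, the rest of your argument (Arzel\`a--Ascoli, demiclosedness, multiplier recovery, uniqueness) goes through as you describe.
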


This result can be  assured
by known results for abstract theory of nonlinear 
evolution equations with time-dependent subdifferential. 
Indeed,
we can apply \cite[Theorem 1.1.2]{Ken}
since $ \varphi  ^t := \varphi +I _{ K _a (t) }$
satisfies the condition   (H)$_2$ in \cite[\S 1.5]{Ken} if $a \in W ^{1,2 } (0 ,T ; \R ^ V )$. 
In this section, however, we shall give an alternative proof 
which only relies on the basic properties of subdifferential 
in order to  avoid some complicated calculations    
arising from the chain rule of $ \frac{d}{dt}  \varphi  ^t  (x (t) )$. 
\begin{proof}
Remark that the Moreau-Yosida regularization of $ I_{K_a (t)}$ and its subdifferential
(the Yosida approximation of $ \partial  I_{K_a (t)} $)
coincide with 
\begin{align*}
( I_{K_a (t)})_{\lambda } (x) & :=
\inf _{y\in \R ^ V } \LD  \frac{1}{2\lambda } \| x - y \| ^2 +  I_{K_a (t)}(y) \RD 
=\frac{1}{2\lambda } \|x - \text{Proj} _{K_a (t)} x  \| ^2 , \\
 ( \partial  I_{K_a (t)} ) _{\lambda } (x) & :=   \partial  ( I_{K_a (t)} ) _{\lambda }  (x)
= \frac{1}{\lambda }  (x - \text{Proj}  _{K_a (t)} x )
\hspace{1cm} \lambda >0   .
\end{align*}
We first 
replace 
$ \partial I_{K_a (t)} $ in (P)$_{a , h , x_ 0 }$ with $ ( \partial  I_{K_a (t)} ) _{\lambda } $
and consider 
the following approximation problem:
\begin{equation*}
\text{(P)}_{\lambda }
~~~~
\begin{cases}
~~\DS x' _{\lambda }(t) + \partial \varphi (x_{\lambda }(t)) 
		+ \frac{x_{\lambda }(t) -\Proj _{K _a (t)} x_{\lambda }(t)}{\lambda } \ni h (t) , \\[3mm] 
~~x_{\lambda } (0) = x_0  .
\end{cases}
\end{equation*}
By the Lipschitz continuity of $\frac{1}{\lambda }  (x - \Proj _{K _a (t)} x ) $,
the abstract theory (see e.g, \cite[Th\'{e}or\`{e}me 3.6]{Bre})
yields the existence of a unique solution   $ x _{\lambda } \in W ^{1,2 } (0,T ; \R ^V )$ to (P)$_{\lambda }$
for every $ x_ 0 \in \R ^V$ and $ h \in L^2 (0,T ;\R ^V )$.

We establish a priori estimates 
independent of $\lambda $. 
Since 
\begin{equation*}
 x_{\lambda }(t) -\text{Proj} _{K _a (t)} x_{\lambda }(t)
  = (0 , \ldots , 0 , x_{\lambda , n+1} (t) -a_1 (t) ,\ldots , x_{\lambda , n+m} (t) -a_m (t)  ) ,
\end{equation*}
multiplying 
\begin{equation*}
( x _{\lambda }(t) -a (t) )' + \partial \varphi (x_{\lambda }(t)) 
		+ \frac{x_{\lambda }(t) -\Proj _{K _a (t)} x_{\lambda }(t)}{\lambda } \ni h (t) - a'(t)
\end{equation*}
by $x _{\lambda } - a $ and using \eqref{sub}, 
we have 
\begin{align*}
&  \frac{1}{2}  \frac{d}{dt} \|  x _{\lambda }(t) -a(t) \| ^2  + \varphi (x_{\lambda }(t)) - \varphi (a(t)) 
		+ \frac{1}{\lambda } \sum_{j=1}^{m} |x_{\lambda , n+j} (t) -a_j(t) |^2 \\
&\hspace{3cm} 			\leq \| h (t) -a ' (t) \|  \| x_{\lambda } (t) -a (t) \| .
\end{align*}
Hence if $x_{\lambda } (0) = x_0 =(x_{01} ,\ldots , x_{0n} , a_1(0) ,\ldots ,a_n (0)) \in K _a (0) $,
we obtain
\begin{equation}
\label{Exi002} 
\begin{split}
&\sup _{0\leq t \leq T} \|  x _{\lambda }(t) -a(t) \| ^2 
		+ \frac{2}{\lambda } \int_{0}^{T}  \sum_{j=1}^{m} |x_{\lambda , n+j} (t) -a_j(t) |^2 dt  \\
&		\leq 
\LC
 \sum_{i=1}^{n }  |  x _{0i } | ^2 +
2 \int_{0}^{T}  \varphi (a(t))  dt
+
\int_{0}^{T}  
\| h (t) -a' (t) \| ^2  dt \RC  \exp{T} .
\end{split}
\end{equation}
From this and 
\eqref{bound},
there exists some constant $C_1$ independent of $\lambda $ such that 
\begin{equation}
\label{Exi003} 
\begin{split}
\sup _{0\leq t \leq T} \| x _{\lambda } (t) \| \leq C_1 ,~~~
\sup _{0\leq t \leq T}\varphi (x _{\lambda } (t))
+ \sup _{0\leq t \leq T} \| \eta _{\lambda } (t) \|  \leq C_1 ,
\end{split}
\end{equation}
where $\eta _{\lambda } $ is the section of (P)$_{\lambda }$, i.e., fulfills  
\begin{equation*}
x' _{\lambda }(t) + \eta _{\lambda } (t) 
		+ \frac{1}{\lambda } (x_{\lambda }(t) -\text{Proj} _{K _a (t)} x_{\lambda }(t))= h (t) ,
		~~~~
		\eta _{\lambda } (t) \in \partial \varphi (x_{\lambda }(t))
\end{equation*}
for  a.e. $t \in [0,T ]$.
We next test (P)$_{\lambda } $ by $ (x_{\lambda }(t) - a(t)) '$.
Since 
$\varphi $ does not depend on the time, 
the standard chain rule $(\eta _{\lambda } (t) , x'_{\lambda }(t)) = \frac{d}{dt} \varphi (x _{\lambda } (t))  $
is valid (see \cite[Lemme 3.3]{Bre}).
Moreover, from 
 \begin{align*}
&(x_{\lambda } (t) -\text{Proj} 
_{K_a (t)} x_{\lambda } (t) , x'_{\lambda }(t) - a'(t)) \\
&=
\sum_{j=1}^{m} (x_{\lambda , n+j } (t) -a _j(t) ) (x_{\lambda , n+j } (t) -a _j(t) ) ' 
=
\frac{1}{2} \frac{d}{dt}  
\sum_{j=1}^{m} | x_{\lambda , n+j } (t) -a _j(t) | ^2 
\end{align*}
and \eqref{Exi003}, we can derive 
 \begin{align*}
& \frac{1}{2} \|  x ' _{\lambda }(t) -a ' (t) \| ^2 +\frac{d}{dt}  \varphi (x_{\lambda }(t)) 
		+
\frac{1}{2\lambda } \frac{d}{dt}  
\sum_{j=1}^{m} | x_{\lambda , n+j } (t) -a _j(t) | ^2  \\
&	\leq  C _1 \|  a' (t) \|  
+ \frac{1}{2} \| h (t) -a' (t) \| ^2  .
\end{align*}
By $x_{\lambda , n+j } (0) = a_{j} (0)$, we get 
\begin{equation}
\label{Exi004} 
\begin{split}
& \int_{0}^{T}   \|  x ' _{\lambda }(t) -a ' (t) \| ^2 dt
		+ 
\frac{1}{\lambda } \sup _{0 \leq t \leq T }  \sum_{j=1}^{m} | x_{\lambda , n+j } (t) -a _j(t) | ^2  \\
&\leq   
2 \varphi (x_0)  +2 C_1 \int_{0}^{T} \|  a' (t) \|  dt
+  \int_{0}^{T} \| h (t) -a' (t) \| ^2    dt.
\end{split}
\end{equation}

 By \eqref{Exi003} and \eqref{Exi004},
we can apply the Ascoli-Arzela theorem 
and extract  a subsequence of $\{ x_{\lambda } \} _{\lambda  > 0 }$
which  converges uniformly in $[ 0, T ]$
(we omit relabeling since the original sequence also  
converges as will be seen later).
Let $x \in C([0,T]; \R ^V )$ be its limit.
Furthermore,
 \eqref{Exi003} and \eqref{Exi004} also lead to
\begin{equation*}
\eta _{\lambda } \rightharpoonup \exists \eta , ~~~
x ' _{\lambda } \rightharpoonup x'  ~~~\text{weakly in } L^2(0,T; \R^ V )
\end{equation*}
and 
the demiclosedness of maximal monotone operators implies that the limit $\eta $ satisfies
$\eta (t) \in \partial \varphi (x(t))$ for a.e. $t \in (0,T )$.
Since \eqref{Exi004} yields 
\begin{equation*}
\sup _{0 \leq t \leq T }  \sum_{j=1}^{m} | x_{\lambda , n+j } (t) -a _j(t) | ^2  
	\leq   \lambda C_1 ,
\end{equation*}
we have $x_{ n+j } (t) = a _j(t)  $, i.e., $x (t) \in K _a (t)$ for every $t \in [0,T ]$.
By the equation, 
 \begin{equation*}
\frac{x_{\lambda } -\Proj _{K_a (\cdot )} x_{\lambda }}{\lambda } \rightharpoonup \exists \xi 
  ~~~\text{weakly in } L^2(0,T; \R^ V )
\end{equation*}
and 
$h(t) - x ' _{\lambda }(t) - \eta _{\lambda } (t) = \partial ( I_{K _a (t)})_{\lambda } (x _{\lambda }(t))$
(a.e. $t\in [0,T]$) hold.
Then it follows from \eqref{sub} that 
for every $v \in L^2(0,T ; \R ^V )$ satisfying $v(t) \in K _a (t)$,
\begin{align*}
&\int_{0}^{T}  (h(t) - x ' _{\lambda }(t) - \eta _{\lambda } (t)  , v(t) - x_{\lambda } (t)) dt \\
&\leq 
\int_{0}^{T}  ( I_{K_a(t)})_{\lambda } (v (t)) dt -  
\int_{0}^{T} ( I_{K _a (t)})_{\lambda } (x _{\lambda }(t)) dt \leq 0 .
\end{align*}
By taking its limit as $\lambda  \to 0 $,
we obtain 
\begin{equation*}
\int_{0}^{T}  (h(t) - x '(t) - \eta (t)  , v(t) - x(t)) dt \leq 0 
~~~~\forall v \in L^2(0,T ; \R ^V )~~\text{with}~~v(t) \in K _a (t) ,
\end{equation*}
which entails $\xi (t) = h(t) - x '(t) - \eta (t) \in \partial I _{K _a(t) } (x (t))$ a.e. $t \in [0,T ]$.
Thus the limit $x $ fulfills all requirements of the solution to (P)$_{a , h ,x_ 0 }$.

The uniqueness of solution can be guaranteed by
the monotonicity of $\partial \varphi $ and $\partial I_{K _a (t)}$.
Therefore
the limit $x$ is determined independently of the choice of subsequences
and then the original sequence $ \{ x_ \lambda  \}$ also converges to $ x $.
\end{proof}

\begin{remark}
We can find  several abstract results
for the case where the movement of the constraint set
is not smooth (see, e.g., {\cite[\S 4]{F-K}}).
However, it seems to be difficult to mitigate our assumption $a \in W ^{1 ,2 } (0 ,T ;\R ^V )$
since $ K _{a} (t)$ does not possess any interior point
and 
the smoothness of movement of $ K_ {a } (t)$
must exactly coincide with the regularity of the solution.
\end{remark}

\section{Continuous Dependence of Solutions on Given data}

We next consider the dependence of solutions with respect to the given data.
\begin{theorem}
\label{Dependence} 
Let
$(a ^k , h ^k  , x ^k _ 0 )
\in 
 W ^{1,2} (0, T ; \R ^V )
 \times 
 L ^{2} (0, T ; \R ^V )
 \times 
 K_{a^k } (0)$ 
 and let 
$x ^k \in W ^{1,2 } (0,T ; \R ^V )$ be the solution to 
\rm{(P)}$_{a ^k  , h ^k , x ^k _ 0 }$
($k=1,2$). 
 Then there exists a constant 
 $\gamma  $ depending only on $p$
and $G= (V, E,  w )$ such that 
 \begin{equation}
\label{Difference} 
\begin{split}
& \sup _{0\leq t \leq T } 
 \LC \sum_{i=1}^{n} |x^1_i (t) - x^2_i (t) | ^2  \RC ^{1/2}
\leq 
\LC \sum_{i=1}^{n} |x^1_{0i}  - x^2_{0i}  | ^2 \RC ^{1/2}  +
\int_{0}^{T}   \| h^1 (t ) -  h^2 (t ) \|   dt
  \\
& \hspace{2cm} +  \gamma 
\LC  
\sup _{0\leq t \leq T } \| x ^1 (t) \| ^{\frac{p-1}{2} }
+
\sup _{0\leq t \leq T } \| x ^2 (t) \| ^{\frac{p-1}{2} }
\RC
\LC  \int_{0}^{T}  \| a^1 (t ) -  a^2 (t ) \| dt \RC ^{1/2}.
\end{split}
\end{equation}
\end{theorem}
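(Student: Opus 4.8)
The plan is to subtract the two equations, test the difference against the projection of $x^1-x^2$ onto the \emph{free} coordinates, and close with the Gronwall-type Lemma~\ref{Gronwall}. Writing each solution in sectioned form, there are $\eta^k(t)\in\partial\varphi(x^k(t))$ and $\xi^k(t)\in\partial I_{K_{a^k}(t)}(x^k(t))$ with $(x^k)'(t)+\eta^k(t)+\xi^k(t)=h^k(t)$ a.e. Set $y:=x^1-x^2$ and decompose $y=\bar y+\tilde y$, where $\bar y:=(x^1_1-x^2_1,\ldots,x^1_n-x^2_n,0,\ldots,0)$ carries the first $n$ components and $\tilde y:=(0,\ldots,0,a^1_1-a^2_1,\ldots,a^1_m-a^2_m)$ the last $m$ (recall $x^k_{n+j}=a^k_j$ because $x^k(t)\in K_{a^k}(t)$, so in fact $\tilde y=a^1-a^2$). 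Since the left-hand side of \eqref{Difference} is exactly $\sup_t\|\bar y(t)\|$, I would test the difference of the two equations against $\bar y$.

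Three terms then simplify. As $\bar y$ and $\tilde y'$ have orthogonal supports, the time-derivative term collapses to $(y',\bar y)=\tfrac12\tfrac{d}{dt}\|\bar y\|^2$. Crucially, $\xi^1-\xi^2$ lives in the last $m$ coordinates by Lemma~\ref{Ind-sub}, so $(\xi^1-\xi^2,\bar y)=0$ and the constraint contribution disappears outright; this is precisely why projecting onto the free coordinates is the right test. For the nonlinear term I would write $\bar y=y-\tilde y$ and use monotonicity of $\partial\varphi$, i.e. $(\eta^1-\eta^2,y)\geq 0$, to get
\[
-(\eta^1-\eta^2,\bar y)\leq(\eta^1-\eta^2,\tilde y)\leq\|\eta^1-\eta^2\|\,\|a^1-a^2\|,
\]
and then control $\|\eta^1-\eta^2\|\leq 2^p({\#}E)\max_{e\in E}w(e)\bigl(\|x^1\|^{p-1}+\|x^2\|^{p-1}\bigr)$ by the boundedness estimate \eqref{bound}.

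Collecting these, with $C:=2^p({\#}E)\max_{e\in E}w(e)$ I arrive at
\[
\tfrac12\tfrac{d}{dt}\|\bar y(t)\|^2\leq\|h^1-h^2\|\,\|\bar y\|+C\bigl(\|x^1\|^{p-1}+\|x^2\|^{p-1}\bigr)\|a^1-a^2\|.
\]
Integrating in $t$ and bounding the last integral over all of $[0,T]$ casts this into the hypothesis of Lemma~\ref{Gronwall}, with $g=\|h^1-h^2\|$ and $\tfrac12\kappa^2=\tfrac12\|\bar y(0)\|^2+C\int_0^T(\|x^1\|^{p-1}+\|x^2\|^{p-1})\|a^1-a^2\|\,ds$. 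The lemma gives $\|\bar y(t)\|\leq|\kappa|+\int_0^T\|h^1-h^2\|\,ds$, and two uses of $\sqrt{A+B}\leq\sqrt{A}+\sqrt{B}$ — first splitting off $\|\bar y(0)\|=(\sum_{i=1}^n|x^1_{0i}-x^2_{0i}|^2)^{1/2}$, then separating the two suprema $\sup_t\|x^k\|^{(p-1)/2}$ from the integral — reproduce \eqref{Difference} with $\gamma=\sqrt{2C}$, which depends only on $p$ and $G$.

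I expect the main obstacle to be conceptual rather than computational: the tempting choice is to test with the full difference $y$, but then the constraint term $(\xi^1-\xi^2,y)$ cannot be signed, since $\xi^k$ are subgradients of indicators of \emph{different} sets $K_{a^1}(t)\neq K_{a^2}(t)$ and $x^2\notin K_{a^1}(t)$ in general, so the definition \eqref{sub} is useless here. The resolution is that projecting onto the free coordinates annihilates this uncooperative term, and the price — a correction involving $\tilde y=a^1-a^2$ in the monotonicity step — is harmless precisely because $\partial\varphi$ is globally bounded by \eqref{bound}. Matching the exponent $(p-1)/2$ and the square root on $\int_0^T\|a^1-a^2\|$ is then just careful bookkeeping with the subadditivity of $\sqrt{\cdot\,}$.
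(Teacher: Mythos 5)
Your proposal is correct and follows essentially the same route as the paper: test the difference of the equations against $x^1-a^1-x^2+a^2$ (your $\bar y$), kill the constraint and $(a^1-a^2)'$ terms via Lemma \ref{Ind-sub} and the coordinate structure, handle the nonlinear term by monotonicity of $\partial\varphi$ plus the bound \eqref{bound} on $\|\eta^k\|$, and close with Lemma \ref{Gronwall} followed by subadditivity of the square root. The only difference is presentational (the paper absorbs the $a$-term into $\kappa$ with the sections $\eta^k$ still unexpanded, applying \eqref{bound} at the very end), so nothing further is needed.
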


\begin{proof}
Let $\xi ^k \in \partial I _{K _{a^k} (\cdot  )} (x^k )$
and $\eta ^k \in  \partial \varphi (x^k ) $
be the sections of (P)$_{a ^k  , h ^k , x ^k _ 0 }$ ($k=1,2$),
namely, satisfy
\begin{equation*}
 \LC x ^k (t) \RC ' + \eta ^k (t) + \xi ^k (t) =h^k (t)
\end{equation*}
for a.e. $t \in [0,T]$.
Multiply the difference of equations, which is equivalent to
\begin{align*}
& \LC x^1 (t) -a ^1 (t)- x^2 (t) + a ^2 (t) \RC ' 
+ \eta ^1 (t)-  \eta ^2  (t)  
+\xi ^1 (t) - \xi ^2 (t)\\
&= h^1 (t) -h^2 (t)  - \LC   a ^1   (t) -  a ^2   (t) \RC ' ,
\end{align*}
by 
\begin{equation*}
x^1 (t) -a ^1 (t)- x^2 (t) + a ^2 (t)
= (x^1 _1 (t) - x^2 _1  (t) ,\ldots , x^1 _n (t) - x^2 _n  (t) , 0, \ldots , 0) 
\end{equation*}
(recall $ x^k (t) \in K_{a^k } (t)  $).
According to Lemma \ref{Ind-sub},
$ \xi ^k  (t) \in \partial I _{K _{a^k} (t )} (x^k (t))$
can be represented as $ \xi ^k (t) = ( 0,\ldots , 0 ,\xi ^k _{n+1} (t) ,\ldots , \xi ^k _{n+m} (t))$
and then fulfill
\begin{align*}
& \LC  \xi ^1  (t)  -\xi ^2  (t) \RC \cdot \LC x^1 (t) -a ^1 (t)- x^2 (t) + a ^2 (t) \RC  \\
&=
\sum_{i=1}^{n } 0 \cdot ( x^1 _i (t) - x^2 _i  (t) ) 
+
\sum_{j=1}^{m} (\xi ^1 _{n+j } (t) - \xi ^2_{n+j} (t) ) \cdot 0 = 0 .
\end{align*}
For the same reason, 
$ \LC  a ^1   (t) - a ^2   (t) \RC '  \cdot \LC  x^1 (t) -a ^1 (t)- x^2 (t) + a ^2 (t) \RC = 0$ 
also holds by 
$ ( a ^k(t) )' = (0, \ldots , 0 , ( a ^k _1 (t) )'  ,\ldots ,  ( a ^k _m (t) )' )$.
Therefore we can derive 
\begin{equation}
\label{Dif002} 
\begin{split}
 \frac{1}{2}
\frac{d}{dt}
\sum_{i=1}^{n}  | x^1 _ i (t)- x^2  _ i (t)  |^2 & \leq  
\LC \| \eta ^1 (t) \| + \| \eta ^2 (t) \| \RC  \|  a ^1 (t)-  a ^2 (t) \| \\
&\hspace{1cm}  + 
 \| h^1 (t) -h^2 (t)  \| 
	 \LC \sum_{i=1}^{n}  | x^1 _ i (t)- x^2  _ i (t)  |^2 \RC ^{1/2}  ,
\end{split}
\end{equation}
which provides \eqref{Difference}
by Lemma \ref{Gronwall}
with 
\begin{align*}
&\Xi  (t) = \LC \sum_{i=1}^{n} |x^1_i (t) - x^2_i (t) | ^2  \RC ^{1/2} , 
~~~~~
g(t) = \| h ^1  (t) - h^2 (t) \|  , \\
&\kappa = \LC \sum_{i=1}^{n} |x ^1 _ {0i}  - x ^2 _ {0i}  | ^2 
+ 2 \int_{0}^{T} \LC \| \eta ^1 (t) \| + \| \eta ^2 (t) \| \RC   \|  a ^1 (t)-  a ^2 (t) \| dt   \RC ^{1/2}  
\end{align*}
and \eqref{bound}.
\end{proof}

\begin{remark}
As shown in \cite[Remark 2.7]{I-U},
the hypergraph Laplacian is not strongly monotone,
i.e., it may be $ \partial \varphi (x ^1 ) =  \partial \varphi (x ^2 ) $ but $x ^1 \neq x^2 $.
Hence it is not easy to deduce
better information from $ ( \eta ^1 (t) - \eta ^2 (t) ) \cdot ( x ^1 (t) - x ^2 (t) ) $
in \eqref{Dif002} than above in general.
\end{remark}

\begin{remark}
Theorem \ref{Dependence}  implies that
the solution to (P)$_{a   , h , x _ 0 }$ is 
$1/2$-H\"{o}lder continuous 
with respect to the given data $a (t)$.
This seems to be optimal and it is difficult to derive better estimate in general.
Indeed, let $E = \{ V \}$, 
$x ^k (t) = ( x^k _1 (t) ,\ldots ,  x^k _n (t) ,a^k _1 (t) ,\ldots ,  a^k _m (t) ) $
be a solution to \rm{(P)}$_{a ^k  , h ^k , x ^k _ 0 }$ ($k=1,2$),
and
$\eta ^k (t) \in \partial \varphi (x ^k (t)) $ be the section of
\rm{(P)}$_{a ^k  , h ^k , x ^k _ 0 }$.
Moreover, we assume the initial data for $x ^1 $ satisfies 
\begin{equation*}
x^1 _{i_1} (0) < a ^1 _j (0) < x^1 _{i_2} (0) ~~~~\forall j = 1, \ldots , m 
\end{equation*}
with some $ i _1, i_2 \in \{ 1 ,\ldots , n \}$
and  
for 
$x ^2 $ fulfills 
\begin{equation*}
a^2 _{j_1} (0) <   x^2 _{i} (0)  < a^2 _{j_2} (0) ~~~~\forall i = 1, \ldots , n 
\end{equation*}
with some $ j _1, j_2 \in \{ 1 ,\ldots , m \}$.
By the continuity of solutions and $a^k$,
there exists some $T _ 0 > 0$ such that the solutions still satisfy
$x^1 _{i_1} (t) < a ^1 _j (t) < x^1 _{i_2} (t)$ and $a^2 _{j_1} (t) <   x^2 _{i} (t)  < a^2 _{j_2} (t)$ 
($\forall i = 1, \ldots , n$, $\forall j = 1, \ldots , m $)
on $ [0 ,T _ 0 ]$.
Then
we can see by 
\eqref{Laplacian} that
$\eta ^k (t) \in \partial \varphi (x ^k (t) )$ with $ t \in [0 ,T _ 0 ]$
can be represented by
\begin{equation*}
\eta ^1 (t) = ( \eta ^1 _{ 1}(t) ,\ldots , \eta ^1 _{n}(t)  , 0 , \ldots ,  0 ), ~~~~
\eta ^2 (t) = (0 , \ldots ,  0 , \eta ^2 _{n+ 1}(t) ,\ldots , \eta ^2 _{n+m }(t) ).
\end{equation*}
Hence we can obtain 
\begin{align*}
&(\eta ^1 (t)- \eta ^2  (t)) \cdot ( x^1 (t)- a ^1 (t) - x^2 (t) + a^2 (t)) \\
&=
\eta ^1 (t) \cdot (  x^1 (t)- a ^1 (t) - x^2 (t) + a^2 (t)) 
=
\sum_{i=1}^{n}
 \eta ^1 _ i (t) (  x^1 _ i (t)  - x^2 _ i  (t)  ) ,
\end{align*}
which appears in the estimate above.
It is not obvious whether we can deduce the term $ \| x^1 (t)   - x^2 (t) \| ^\alpha  $ $(\alpha > 1)$
from this.
\end{remark}

\section{Asymptotic Behavior of Global Solution}

Throughout this section, let $x (t) $ stand for a unique solution to (P)$_{a , h ,x _0}$.
According to  Theorem \ref{Existence},
the solution can be globally extended if 
\begin{equation}
\label{A1}
a \in W ^{1,2} _{\text{loc}} (0, \infty  ;\R ^ V) ,~~~~
h \in L ^{2} _{\text{loc}} (0, \infty  ;\R ^ V) .
\end{equation}
We discuss the global behavior of solution.
In addition to the above, we assume 
\begin{equation}
\label{A2}
a' , h' \in L ^1 (0, \infty ; \R ^V ) 
\end{equation}
so that 
$ a _{\infty } = \lim _{ t \to \infty } a(t) $
and
$ h _{\infty } = \lim _{ t \to \infty } h(t) $ 
can be determined uniquely.

We first consider the case where 
$a$ is independent of the time.
Henceforth, 
$ K _a (t ) $ is abbreviated to $K _a$ when $a$ is a constant vector.
\begin{proposition}
\label{GB01} 
Let $a = (0 ,\ldots , 0 , a_{1} , \ldots , a_{m}) \in \R ^V $ be a constant vector
and $h$ satisfy \eqref{A1} and \eqref{A2}.
If $h _{\infty } \in R(\partial \varphi + \partial I _{K_a } ) $ and 
$h - h_\infty \in L ^1(0,\infty  ; \R ^V )$,
then 
$x_{\infty} = \lim _{t \to \infty } x(t)$ is determined uniquely 
and satisfies
$\partial \varphi (x_{\infty }) + \partial I _{K_a } (x_{\infty }) \ni h_{\infty }$,
that is, 
$x_{\infty }$ is a stable solution to (P)$_{a , h_{\infty} ,x _{\infty}}$. 
\end{proposition}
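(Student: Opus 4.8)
The plan is to read (P)$_{a , h , x _ 0 }$ as a perturbed gradient flow and run an Opial-type argument adapted to the $L^1$ forcing. Since $a$ is constant and $D(\varphi)=\R^V$, we have $\partial\varphi+\partial I_{K_a}=\partial(\varphi+I_{K_a})$, so the inclusion reads $x'(t)+\partial(\varphi+I_{K_a})(x(t))\ni h(t)$. Writing $g:=h-h_\infty\in L^1(0,\infty;\R^V)$ and $\Phi_\infty(y):=\varphi(y)+I_{K_a}(y)-h_\infty\cdot y$, the range condition $h_\infty\in R(\partial\varphi+\partial I_{K_a})$ furnishes a stationary point $z$, i.e.\ $\eta_z+\xi_z=h_\infty$ for some $\eta_z\in\partial\varphi(z)$ and $\xi_z\in\partial I_{K_a}(z)$; equivalently $z$ minimizes $\Phi_\infty$ and $\min\Phi_\infty=\Phi_\infty(z)$. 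Denoting by $\eta(t)\in\partial\varphi(x(t))$ and $\xi(t)\in\partial I_{K_a}(x(t))$ the sections, so that $x'(t)+\eta(t)+\xi(t)=h(t)$ a.e., note that $g(t)-x'(t)=\eta(t)+\xi(t)-h_\infty\in\partial\Phi_\infty(x(t))$.

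First I would prove boundedness of the orbit and convergence of the distance to an arbitrary stationary point $z$. Testing the difference of the equation and the relation $\eta_z+\xi_z=h_\infty$ against $x(t)-z$ and discarding the two terms $(\eta(t)-\eta_z)\cdot(x(t)-z)$ and $(\xi(t)-\xi_z)\cdot(x(t)-z)$, which are nonnegative by the monotonicity contained in \eqref{sub}, leaves
\[
\frac{1}{2}\frac{d}{dt}\|x(t)-z\|^2\le g(t)\cdot(x(t)-z)\le\|g(t)\|\,\|x(t)-z\|.
\]
Lemma \ref{Gronwall} then gives $\|x(t)-z\|\le\|x_0-z\|+\int_0^\infty\|g\|=:M<\infty$, so the orbit is bounded; feeding this bound back, $t\mapsto\frac{1}{2}\|x(t)-z\|^2-M\int_0^t\|g\|$ is nonincreasing and bounded below, hence $\|x(t)-z\|$ converges as $t\to\infty$. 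Because $\partial\varphi$ is not strongly monotone (\cite[Remark 2.7]{I-U}), this does not yet yield convergence of $x(t)$ itself.

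The crucial step localizes a limit point inside the solution set. Applying the subgradient inequality \eqref{sub} for $\Phi_\infty$ at the minimizer $z$, with the section $g(t)-x'(t)\in\partial\Phi_\infty(x(t))$, gives
\[
\Phi_\infty(x(t))-\min\Phi_\infty\le(g(t)-x'(t))\cdot(x(t)-z).
\]
Integrating over $[0,T]$, estimating $g(t)\cdot(x(t)-z)\le M\|g(t)\|$, and using $-\int_0^T x'(t)\cdot(x(t)-z)\,dt=\frac{1}{2}\|x_0-z\|^2-\frac{1}{2}\|x(T)-z\|^2\le\frac{1}{2}\|x_0-z\|^2$, I obtain a bound uniform in $T$, namely $\int_0^\infty(\Phi_\infty(x(t))-\min\Phi_\infty)\,dt<\infty$. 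The integrand being continuous and nonnegative, there is a sequence $t_n\to\infty$ with $\Phi_\infty(x(t_n))\to\min\Phi_\infty$; by boundedness I may assume $x(t_n)\to x_*$, and lower semicontinuity forces $\Phi_\infty(x_*)=\min\Phi_\infty$, i.e.\ $\partial\varphi(x_*)+\partial I_{K_a}(x_*)\ni h_\infty$.

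Finally I would close the loop by applying the first step with $z=x_*$: $\|x(t)-x_*\|$ converges, and since $x(t_n)\to x_*$ this limit must be $0$, so the full orbit satisfies $x(t)\to x_*=:x_\infty$, which is therefore uniquely determined and a stable solution of (P)$_{a , h _ {\infty} , x _ {\infty} }$. I expect the main obstacle to be exactly this passage from subsequential to full convergence: lacking strong monotonicity I cannot contract distances, and must instead synthesize the monotone convergence of $\|x(\cdot)-z\|$ (valid for every stationary $z$) with the integrability $\int_0^\infty(\Phi_\infty(x)-\min\Phi_\infty)\,dt<\infty$. Establishing that integrability using only the elementary chain rule for $\frac{1}{2}\|x(\cdot)-z\|^2$, rather than the time-dependent chain rule for $\frac{d}{dt}\Phi_\infty(x(\cdot))$ that the paper is at pains to avoid, is the delicate part of the argument.
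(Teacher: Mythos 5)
Your argument is correct, but it is genuinely different from what the paper does: the paper's entire proof of Proposition \ref{GB01} is a one-line citation of \cite[Th\'{e}or\`{e}me 3.11]{Bre}, which applies because $a$ is constant, so $\partial \varphi + \partial I_{K_a} = \partial (\varphi + I_{K_a})$ is a \emph{time-independent} subdifferential and the equation is a gradient flow with an $L^1$-in-time perturbation $h - h_\infty$. What you have done is reconstruct, in the finite-dimensional setting, a proof of that abstract theorem: quasi-Fej\'{e}r monotonicity of $\| x(t) - z \|$ for every stationary point $z$ (obtained by discarding the two nonnegative monotonicity terms and invoking Lemma \ref{Gronwall}); integrability of $t \mapsto \Phi_\infty (x(t)) - \min \Phi_\infty$, obtained by pairing the section $g - x' \in \partial \Phi_\infty (x)$ with $x - z$ rather than differentiating $\Phi_\infty (x(\cdot))$; extraction of a limit point $x_*$ that minimizes $\Phi_\infty$ via lower semicontinuity; and the Opial-type upgrade to convergence of the whole orbit by re-running the Fej\'{e}r step with $z = x_*$. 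I checked each step and see no gap: the chain rule for $\tfrac{1}{2} \| x(\cdot) - z \| ^2$ needs only $x \in W^{1,2}_{\text{loc}}$, the integrand $\varphi (x(t)) - h_\infty \cdot x(t) - \min \Phi_\infty$ is continuous and nonnegative so its integrability forces $\liminf_{t \to \infty} = 0$, and bounded orbits in $\R^V$ have convergent subsequences. As for what each route buys: the paper's citation is shorter and its source theorem also covers infinite-dimensional Hilbert spaces (where only weak convergence is available); your proof is self-contained, uses only ingredients already present in the paper (monotonicity, Lemma \ref{Gronwall}, lower semicontinuity), is consistent with the paper's stated aim of avoiding heavy abstract machinery, and structurally parallels the paper's own proof of Theorem \ref{ABTh03}, where subsequential convergence is likewise upgraded to full convergence --- there via the continuous-dependence estimate of Theorem \ref{Dependence}, here via your Fej\'{e}r property at $z = x_*$.
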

\begin{proof}
We can apply \cite[Th\'{e}or\`{e}me 3.11]{Bre},
since
$\partial ( \varphi +  I_{K_{a}} )= \partial \varphi + \partial I_{K_{a}}$ is time-independent.
\end{proof}

\begin{remark}
Let $a = (0 ,\ldots , 0 , a_{1} , \ldots , a_{m}) $ be a constant vector.
By the definition of subdifferential \eqref{sub},
$h_{\infty } \in R( \partial \varphi  + \partial  I _{ K _{a }})$
holds if and only if
the closed convex set 
$ \{ x \in \R ^V ;~\Phi (x) = \min _{y \in \R ^V} \Phi (y) \} $ is not empty,
where
$\Phi : \R ^V \to \R  $
is a proper lower semi-continuous convex function defined by 
\begin{equation*}
\Phi (x) := 
\begin{cases}
~~\DS \varphi (x) + I _{ K _{a }} (x) - h _{\infty} \cdot x ~~&~~ \text{ if } x \in K_{a } ,\\
~~ \DS + \infty ~~&~~ \text{ if } x \not \in K_{a } .
\end{cases}
\end{equation*}
From this and $ \varphi  , I _{ K _{a }} \geq 0 $,
we can derive $0 \in R( \partial \varphi  + \partial  I _{ K _{a }})$ for every $p \geq 1 $.
Since Theorem \ref{Poincare} 
 implies 
 \begin{equation*}
 \Phi (x) \geq \frac{1}{2 ^p C ^ p }  \| x  \| ^p - \frac{(n+1) ^p}{C ^p } \LC \sum_{j=1}^{m} |a _ { j}|  \RC ^p
 - h_{\infty } \cdot x ,
\end{equation*}
there exists a global minimizer for every $h _{\infty }$, i.e.,
$R( \partial \varphi  + \partial  I _{ K _{a }}) = \R ^V $ holds if $p > 1 $.
When $p=1 $, however, we can see that $ R( \partial \varphi  + \partial  I _{ K _{a }}) \neq \R ^V $.
Indeed, 
\begin{equation*}
a  \equiv  0,~~~ h_{\infty } = (4 ({\#}E) \max _{e\in E} w(e) , 0 , \ldots , 0 ), ~~~  
x ^ \mu  = ( \mu , 0 , \ldots , 0 ) \in K_{a }~~~ (\mu = 1 , 2, \ldots ) 
\end{equation*}
satisfy
$\Phi (x _{\mu }) \to - \infty $ as $\mu \to \infty $ (see \eqref{bound}).
\end{remark}

We next deal with the case where $a $ depends on $t$
and $ h (t) , a(t) \to  0 $ as $t \to \infty $.
\begin{theorem}
\label{ABTh01}
Let \eqref{A1}, \eqref{A2}, and
$ h \in L^1 (0 , \infty ; \R ^V ) \cap L^2 (0 , \infty ; \R ^V ) $ be assumed.
If $a \in L^p (0, \infty ; \R ^V )$ and $ a' \in  L ^2 ( 0 , \infty ;\R ^V )$ are satisfied, 
then $x(t) \to 0 $ as $t \to \infty $. 
In particular, 
if $ h = a  \equiv 0$, then 
there exists some constant $\gamma > 0 $ depending only on $p$ and $G = (V ,E ,w)$ such that 
\begin{equation}
\label{decay} 
\| x(t) \| \leq 
\begin{cases}
~~\DS \LC  \| x_ 0 \| ^ {2 -p } - \gamma t \RC ^{ \frac{1}{2-p} } _+ ~~
			&~~\text{ if } 1 \leq p <2 ,\\
~~\DS   \| x_ 0 \|  \exp \LC -\gamma t \RC ~~
			&~~\text{ if } p =2 ,\\
~~\DS \LC  \| x_ 0 \| ^ {- (p-2 ) } +  \gamma t \RC  ^{-  \frac{1}{p-2} } ~~
			&~~\text{ if }  p >2 ,
\end{cases}
\end{equation}
where $(s) _+ := \max \{  0, s \}$ stands for the positive part of $s$. 
\end{theorem}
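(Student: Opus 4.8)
The plan is to treat the two assertions separately, beginning with the quantitative decay \eqref{decay} in the case $h=a\equiv 0$, since it already displays the driving mechanism in its cleanest form. Here I would simply test the equation by $x(t)$ itself. As $a\equiv 0$ forces $x(t)\in K_0$, the last $m$ components of $x(t)$ vanish, so by Lemma \ref{Ind-sub} any section $\xi(t)\in\partial I_{K_0}(x(t))$ is supported on those components and $(\xi(t),x(t))=0$. Using the positive $p$-homogeneity of $\varphi$ (Euler's identity $(\eta,x)=p\varphi(x)$ for $\eta\in\partial\varphi(x)$; the crude subgradient bound $(\eta,x)\ge\varphi(x)$ would also suffice) gives
\[
\tfrac12\tfrac{d}{dt}\|x(t)\|^2 + p\,\varphi(x(t)) = 0 .
\]
Since $x(t)\in K_0$ makes $\min_i|a_i|=0$ in Theorem \ref{Poincare} and $\|x\|\le\sum_{k=1}^n|x_k|$, the Poincar\'e inequality yields $\varphi(x)\ge C^{-p}\|x\|^p$. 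With $\rho(t):=\|x(t)\|$ this reduces the problem to the scalar inequality $\rho'\le-\gamma_0\rho^{p-1}$, $\gamma_0=pC^{-p}$, which I would integrate in the three regimes via $\frac{d}{dt}\rho^{2-p}$, $\frac{d}{dt}\log\rho$, $\frac{d}{dt}\rho^{-(p-2)}$ to recover exactly the three cases of \eqref{decay}; the positive part for $1\le p<2$ records finite-time extinction, after which $x\equiv 0$ by uniqueness.

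For the general convergence $x(t)\to 0$ I would run two energy estimates. First, test the equation by $u:=x-a$, which is supported on the first $n$ components (so $\|x\|^2=\|u\|^2+\|a\|^2$). Lemma \ref{Ind-sub} again gives $(\xi,u)=0$, while the subgradient inequality applied at $x$ with test point $a$ gives $(\eta,u)\ge\varphi(x)-\varphi(a)$, producing
\[
\tfrac12\tfrac{d}{dt}\|u\|^2 + \varphi(x) \le \varphi(a) + \bigl(\|h\|+\|a'\|\bigr)\|u\| .
\]
Because $\varphi(a)\le c\|a\|^p$ by \eqref{bound} with $a\in L^p$, and $\|h\|+\|a'\|\in L^1$ by $h\in L^1$ and \eqref{A2}, a bootstrap on $\sup_{[0,T]}\|u\|$ yields a bound on $\sup_t\|u(t)\|$ independent of $T$, and hence $\int_0^\infty\varphi(x(t))\,dt<\infty$.

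Second, I would test by $u'=(x-a)'$. The term $(\xi,u')$ vanishes as before, and the time-independence of $\varphi$ licenses the chain rule $(\eta,x')=\frac{d}{dt}\varphi(x)$ already used in the proof of Theorem \ref{Existence}; after absorbing $(h-a',u')$ by Young's inequality and estimating $(\eta,a')\le c(\sup\|x\|)^{p-1}\|a'\|$ via \eqref{bound}, one arrives at
\[
\tfrac12\|u'\|^2 + \tfrac{d}{dt}\varphi(x(t)) \le \tfrac12\|h-a'\|^2 + c(\sup\|x\|)^{p-1}\|a'\| =: k(t),
\]
with $k\in L^1$ (using $h,a'\in L^2$ and $a'\in L^1$). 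Thus $t\mapsto\varphi(x(t))$ is nonnegative, integrable, and has derivative bounded above by an $L^1$ function; the elementary lemma ``$f\ge0$, $f\in L^1$, $f'\le k\in L^1$ $\Rightarrow f(t)\to0$'' (proved by noting $t\mapsto f(t)+\int_t^\infty k_+$ is nonincreasing) forces $\varphi(x(t))\to 0$. Finally $a'\in L^1$ gives $a(t)\to a_\infty$, and $a\in L^p$ forces $a_\infty=0$, so the defect $\min_i|a_i(t)|$ in Theorem \ref{Poincare} vanishes; hence $\|u(t)\|\le C\varphi(x(t))^{1/p}+n\min_i|a_i(t)|\to 0$, and with $\|a(t)\|\to0$ we conclude $\|x(t)\|^2=\|u\|^2+\|a\|^2\to 0$.

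The main obstacle is precisely the passage from the integrability $\int_0^\infty\varphi(x)\,dt<\infty$ to the pointwise limit $\varphi(x(t))\to0$: the first estimate alone yields only a subsequence $t_k\to\infty$ with $\varphi(x(t_k))\to0$. This is why the second estimate is indispensable — it bounds $\frac{d}{dt}\varphi(x(t))$ above by an $L^1$ function and thereby upgrades integrability to genuine decay through the convergence lemma. A secondary point needing care is that the Poincar\'e lower bound for $\varphi$ carries the additive defect $n\min_i|a_i(t)|$, whose vanishing is exactly where the two hypotheses $a\in L^p$ and $a'\in L^1$ combine to give $a(t)\to0$.
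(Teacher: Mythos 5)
Your proposal is correct, and while your two energy estimates (testing by $x-a$ and by $(x-a)'$) and your derivation of the decay estimate \eqref{decay} (Poincar\'{e} inequality plus integration of the resulting scalar differential inequality) coincide with the paper's, your route to the main conclusion $x(t)\to 0$ is genuinely different. The paper argues dynamically: it shows $x$ is bounded with $x'\in L^2(0,\infty;\R^V)$, forms the $\omega$-limit set, studies the shifted solutions $X_k(s)=x(t_k+s)$, proves the sections of the shifted inclusions tend to $0$ in $L^2(0,1;\R^V)$, and uses the subdifferential inequality with dominated convergence to conclude that every $\omega$-limit point minimizes $\varphi+I_{K_{a_\infty}}$, hence equals $0$ by \eqref{minimizer}, subsequential convergence then being upgraded to full convergence. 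You instead upgrade the integrability $\int_0^\infty\varphi(x(t))\,dt<\infty$ from the first estimate to the pointwise decay $\varphi(x(t))\to0$ by combining it with the bound $\frac{d}{dt}\varphi(x(t))\le k(t)$, $k\in L^1(0,\infty)$, from the second estimate; your elementary lemma ($f\ge0$, $f\in L^1$, $f'\le k\in L^1$ implies $f\to0$, via the nonincreasing function $f(t)+\int_t^\infty k_+$) is correct, and the conversion of $\varphi(x(t))\to0$ into $\|x(t)\|\to0$ through \eqref{P-ineq} works because $a'\in L^1$ and $a\in L^p$ force $a_\infty=0$, so the defect $n\min_i|a_i(t)|$ vanishes. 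Your argument is more elementary---no weak compactness, demiclosedness, or dominated convergence---and it lets the hypergraph-specific Poincar\'{e} inequality carry the convergence statement, not only \eqref{decay}. What the paper's $\omega$-limit machinery buys in exchange is generality: it is exactly the template reused in Theorem \ref{ABTh03} for $a_\infty\neq0$, $h_\infty\neq0$, where your shortcut breaks down because the limit points must then minimize $y\mapsto\varphi(y)+I_{K_{a_\infty}}(y)-h_\infty\cdot y$, which is no longer characterized by $\varphi=0$, so $\varphi(x(t))\to0$ is not the right target. Two harmless inefficiencies: the paper notes $a'(t)\cdot(x(t)-a(t))=0$ exactly, so the $\|a'\|\,\|u\|$ term in your first estimate can be dropped; and, as you observe, the crude bound $(\eta,x)\ge\varphi(x)$ suffices in place of Euler's identity.
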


\begin{proof}
Henceforth, let $c_0 , C_ 0$ denote general constants independent of $T $. 
Note that the assumptions
lead to $a , h  \in L^{\infty } (0, \infty ; \R ^ V)$ and
$ a _{\infty } = h _{\infty} = 0$. 
%
%

Testing (P)$_{a , h , x_0 }$, which is equivalent to
\begin{equation*}
 ( x (t) -a  (t) )'  + \partial \varphi (x(t)) + \partial I_{K_ {a} (t)} ( x(t) ) \ni h (t) - a'(t)  ,
\end{equation*}
by $(x -a )=
(x _1 (t) , \ldots , x  _n  (t) , 0 , \ldots ,0)$ (recall \eqref{a}), we obtain
\begin{equation}
\label{AB000} 
\frac{1}{2}  \frac{d}{dt} \sum_{i=1}^{n} |x_ i(t)| ^2  +\varphi (x(t) ) -\varphi (a (t) )  
\leq   \|  h  (t) \|  \LC  \sum_{i=1}^{n} |x_ i(t)| ^2 \RC ^{1/2}  .
\end{equation}
Here we use the fact that
$\xi  (t) \cdot ( x (t) -a  (t)  ) = 0$ holds for every $\xi (t) 
\in \partial I _{K _{a} (t)} (x (t))$ by Lemma \ref{Ind-sub}
and 
$  a ' (t)  \cdot ( x (t) -a (t)  ) = 0$ by 
$a '(t)  = (0, \ldots , 0 , a' _1 (t)  ,\ldots , a ' _m (t)  )$.
By  
Lemma \ref{Gronwall}, 
we have for arbitrary $T > 0 $ 
\begin{equation}
\label{Dif001}
\sup _{ 0 \leq t \leq T } \LC \sum_{i=1}^{n} |x_i (t)| ^2 \RC ^{1/2}
\leq   
\LC \sum_{i=1}^{n}|x  _{0i}|^2 
+
 2 \int_{0}^{T}  \varphi (a  (t) ) dt \RC ^{1/2} 
+  \int_{0}^{T}  \| h   (t)  \| dt .
\end{equation}
Since $a \in L ^p (0, \infty ; \R ^V )$ is assumed and 
$\varphi (a (t)) \leq C_ 0  \| a (t) \| ^p $ holds by \eqref{bound}, 
it follows that 
$ \sup _{ 0 \leq t < \infty  }\| x (t) \|  $ is bounded.
This together with \eqref{bound} implies that
the section $\eta  \in \partial \varphi (x )$ of (P)$_{a , h , x _ 0 }$ satisfies
$\sup _{ 0 \leq t < \infty  }\| \eta  (t) \| \leq  C_0 $. 
Moreover,  Theorem \ref{Poincare} yields  
\begin{equation*}
 \varphi (x(t) )    
 \geq \frac{1}{2^p C}  \LC  \sum_{i=1}^{n} |x _ i (t)|  \RC ^{p}     
 -
 \frac{n ^p }{2^p C} \LC  \sum_{j=1}^{m} |a _ j (t)|  \RC ^{p}   .
\end{equation*}
Hence 
integrating \eqref{AB000} over $[0, \infty ) $, we also get 
$  \int_{0}^{\infty } \| x(t) \| ^p  dt  \leq  C_0 $.

Next multiplying  (P)$_{a , h , x _ 0 }$ by $(x - a ) ' =  (x' _1 (t  ) , \ldots ,x '_n (t) ,0 , \ldots ,0  )$
and using the boundedness of $ \eta (t)$,
we have 
\begin{equation*}
\frac{1}{2} 
\sum_{i=1}^{n} | x' _i (t) | ^2 + \frac{d}{dt} \varphi (x (t ) ) 
 \leq C_0 \|  a ' (t) \|
 + \frac{1}{2} \| h (t ) \| ^2  .
\end{equation*}
We here used Lemma \ref{Ind-sub} and $ a ' (t) \cdot ( x (t) -a  (t)  )' = 0$.
From $a ' \in L^1 (0 , \infty ;\R^ V )  $ and $h \in L^2 (0 , \infty ;\R^ V ) $,
we can derive
$\sum_{i=1}^{n}  \int_{0}^{\infty } | x' _ i (t) | ^2 dt \leq C_0  $.  

Define the {\it $\omega $-limit set } of the solution $x$ to (P)$_{a , h ,x_ 0}$ by
\begin{equation*}
\omega (a , h ,x_ 0  ) := \{ y \in \R ^V ;~ \exists \{ t _ k \} _{k\in \N } ~~\text{ s.t. }~~
t _k \to \infty ~~x(t_ k) \to y ~~\text{ as } k\to \infty  \} .
\end{equation*}
By the global boundedness of $ x(t)$ given above,
$\omega (a , h ,x_ 0  )  \neq \varnothing $  holds under the assumptions of Theorem \ref{ABTh01}. 
Let $x _{\infty } \in \omega (a , h ,x_ 0  )$ and 
$ \{ t _ k \} _{k\in \N }$ satisfy 
$x (t _k ) \to x _{\infty }$ and  $t _k  \to \infty $ as $k\to \infty $. 
Note that $x _{\infty } \in K _{a _{\infty } }  $
since $ a (t ) \to a _{\infty }$ and $ x _{n + j} (t) = a _j (t)$.

From $x'  \in L^2 (0, \infty ; \R ^V )$, 
it follows that for every $s \in [0,1 ]$
\begin{equation*}
\LC \sum_{i=1}^{n} | x _i (t _ k +s ) - x_ i  (t _ k )  | ^2 \RC ^{1/2}
 \leq  \LC  \sum_{i=1}^{n} \int_{t_k }^{t_k +1 } | x ' _i (\tau ) | ^2 d \tau  \RC ^{1/2} 
 \to 0 
\end{equation*}
as $t _ k \to \infty $,
that is, $ x _i (t _ k +s ) \to x _{\infty i}$ holds uniformly with respect to $s \in [0,1 ]$.

We here define $X _k \in W ^{1,2} ( 0,1 ; \R ^V ) $ by $X _k (s) := x (t _k + s )$.
Then it is easy to see that
$X _k (s) \to x_ {\infty }$ uniformly in $[0,1 ]$
and 
$X _k  $ is a solution to 
\begin{equation*}
 X ' _k (s) + \partial \varphi (X_k (s)) + \partial I _{K _{a }(t_ k +s )} (X_k (s) ) \ni h (t _ k +s ) .
\end{equation*}
Let $ \zeta  _k (s) \in \partial \varphi (X_k (s)) + \partial I _{K _{a } (t_ k +s )} (X_k (s) ) $
be the section of this inclusion, 
then
\begin{align*}
\LC \int_{0}^{1} \| \zeta  _k (s) \| ^2 ds \RC ^{1/2 }
& \leq   
\LC \int_{0}^{1} \|  X ' _k (s) \| ^2 ds \RC ^{1/2 } 
+
\LC \int_{0 }^{1}  \|  h  (t _ k +s) \| ^2 ds \RC ^{1/2 } \\
&
=
\LC \int_{t _ k }^{t _ k + 1} \|  x'  (s) \| ^2 ds \RC ^{1/2 } 
+
\LC \int_{t _ k }^{t _ k + 1}  \|  h  (s) \| ^2 ds \RC ^{1/2 } .
\end{align*}
Hence 
$\zeta  _ k \to 0$ in $ L ^2 (0,1 ;\R ^V )$
by  $ a' , h \in L^2 (0, \infty ; \R ^V )$.

Choose $y_1 , \ldots , y _ n \in \R $ arbitrarily
and define 
\begin{equation*}
Y _ k (s) := (y_1 , \ldots , y _ n , a _1 (t_k +s) ,  \ldots , a _m (t_k +s) ) \in K _{a } (t _ k +s ) .
\end{equation*}
Obviously, $Y _ k (s) $ converges to $y _{\infty } 
:= (y_1 , \ldots , y _ n ,$ $ a _{\infty 1 } , \ldots , a _{\infty m } ) \in K _{a _{\infty }}$
as $k \to \infty $
uniformly with respect to  $s \in [0, 1]$.
By the definition of subdifferential \eqref{sub}
and $   \partial ( \varphi  + I _{K _{a} (t_ k +s )} ) =  \partial \varphi  + \partial I _{K _{a }(t_ k +s )} $,
we obtain
\begin{equation*}
\int_{0}^{1 } \zeta _k (s) \cdot (Y_ k (s) - X _k (s) ) ds \leq 
\int_{0}^{1 } \LC  \varphi ( Y_ k (s) ) - \varphi ( X _k (s) ) \RC ds   .
\end{equation*}
Since $a , x \in L^{\infty } ( 0 ,\infty ; \R ^V )$ and  \eqref{bound},
the dominated convergence theorem is applicable to this and  
\begin{equation*}
0 \leq \int_{0}^{1 } \LC  \varphi ( y _{\infty } ) - \varphi ( x _{\infty }  ) \RC ds
=    
 \LC  \varphi ( y _{\infty } ) +I _{ K _{a _ \infty}} (y _{\infty } ) \RC 
- \LC  \varphi ( x _{\infty } ) +I _{ K _{a _ \infty}} (x _{\infty } ) \RC
\end{equation*}
holds for every $y_1 , \ldots , y _ n \in \R $, which
implies that $\varphi + I _{ K _{a _ \infty}} $ attains its minimum at $x _{\infty }$. 
When $a _{\infty }= 0$,
the minimum of $\varphi + I _{ K _{a _ \infty}} $ coincides with $0$.
From this fact and \eqref{minimizer}, we can derive $x _{\infty } =  ( 0 , \ldots , 0 )$.
Since the limit is determined independently of the choice of subsequences $\{ t_ k \}$,
the whole trajectory $\{ x(t) \}$ also tends to $0$.
Therefore we can conclude that $\omega (a, 0,x_0 ) = \{ 0 \}$.

Especially, if $a, h  \equiv 0$,  then 
\eqref{AB000} becomes 
\begin{equation*}
\frac{1}{2} \frac{d}{dt} \sum_{i=1}^{n} |x_ i(t)| ^2 
+ \varphi (x (t )) \leq \varphi (0 )  = 0 .    
\end{equation*}
Furthermore, \eqref{P-ineq} leads to 
$C  \varphi (x(t) )   
 \geq  \LC  \sum_{i=1}^{n} |x _ i (t)|  \RC ^{p}$.
Hence multiplying (P)$_{0, 0 , x _ 0 }$ by $x = (x-a )$, we get 
\begin{equation*}
\frac{1}{2}  \frac{d}{dt} \| x (t) \| ^2  +  c_ 0  \| x(t) \| ^p    
\leq  0
\end{equation*}
 which immediately yields \eqref{decay}. 
\end{proof}

We next consider that case where $  a _{\infty } \neq 0$.
When $\int_{0}^{\infty } \varphi (a (t)) dt   < \infty $,
there exists some subsequence $\{ t_k \}  _{k\in \N }$  
such that $t _k \to \infty$ and $\varphi ( a (t _k ) ) \to \varphi ( a _{\infty } ) = 0   $,
which together with \eqref{minimizer} leads to $a _{\infty } = 0$.
This implies that
$\sup _{ 0\leq t < \infty  } \| x (t) \| < \infty $ can not be deduced 
via a priori estimate \eqref{Dif001}.
We here give another way to establish uniform estimates by using Theorem \ref{Dependence}. 
\begin{theorem}
\label{ABTh03} 
Let $1 \leq p <3$,
 \eqref{A1}, and \eqref{A2} be satisfied.
Moreover, suppose that
\begin{align*}
&a '  \in L^2 (0, \infty ; \R ^V ), ~~~
h _ {\infty } \in R (\partial \varphi + \partial I _{K _{a _{\infty }}}), \\
&a - a_{\infty} \in  L^1 (0 , \infty ;\R ^V ) , ~~~
h - h_{\infty} \in  L^1 (0 , \infty ;\R ^V ) \cap  L^2 (0 , \infty ;\R ^V ).
\end{align*}
Then 
$x_{\infty} = \lim _{t \to \infty } x(t)$ is determined uniquely 
and satisfies
$\partial  \varphi (x _{\infty } ) + \partial I _{K_{a _{\infty}}} (x _{\infty } ) \ni h _{\infty }$.
\end{theorem}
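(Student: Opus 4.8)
The plan is to follow the two-stage strategy indicated just before the statement: first establish a uniform-in-time bound $\sup_{t \ge 0}\|x(t)\| < \infty$ by exploiting the continuous-dependence estimate of Theorem \ref{Dependence}, and then promote the resulting compactness to genuine convergence of the whole trajectory by combining an $\omega$-limit argument with the monotonicity of the operators. The naive a priori estimate \eqref{Dif001} is unavailable here precisely because $a_{\infty} \neq 0$ forces $\int_0^\infty \varphi(a(t))\,dt = \infty$ (by \eqref{minimizer}, $\varphi(a(t)) \to \varphi(a_{\infty}) > 0$), so boundedness cannot be read off from \eqref{AB000}. I expect this first stage---producing a bound independent of $T$---to be the main obstacle, and it is exactly where the hypothesis $p < 3$ will be consumed.

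For the boundedness step I would first invoke Proposition \ref{GB01} with the constant data $(a_{\infty}, h_{\infty})$ (here $h \equiv h_{\infty}$, so $h - h_{\infty} \equiv 0 \in L^1$ and $h_{\infty} \in R(\partial\varphi + \partial I_{K_{a_{\infty}}})$ by assumption) to obtain a reference solution $y$ of (P)$_{a_{\infty}, h_{\infty}, y_0}$ with $y_0 \in K_{a_{\infty}}$; since $y(t)$ converges, $M_y := \sup_{t \ge 0}\|y(t)\| < \infty$. Applying \eqref{Difference} to the pair $x$, $y$ on $[0,T]$ and using $h - h_{\infty} \in L^1$, $a - a_{\infty} \in L^1$, and $\sup\|y\| \le M_y$, the right-hand side is controlled by a constant plus $\gamma\,(\sup_{[0,T]}\|x\|)^{(p-1)/2}(\int_0^\infty\|a-a_{\infty}\|\,dt)^{1/2}$. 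Writing $S_T := \sup_{[0,T]}\|x\|$ and splitting $\|x\|^2$ into its free components (bounded through $\sum_i|x_i - y_i|^2$ and $M_y$) and its fixed components (bounded by $\|a\|_{L^{\infty}}$, finite since $a' \in L^1$), this yields a self-referential inequality of the form $S_T^2 \le C_1 + C_2 S_T^{\,p-1}$ with $C_1, C_2$ independent of $T$. Because $p - 1 < 2$, the set $\{S \ge 0 : S^2 \le C_1 + C_2 S^{\,p-1}\}$ is a bounded interval $[0, S^*]$ (the quotient $1 - C_2 S^{\,p-3} - C_1 S^{-2}$ increases to $1$), so $S_T \le S^*$ for all $T$, hence $\sup_{t\ge0}\|x(t)\| \le S^* < \infty$.

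With boundedness in hand I would then retrace the proof of Theorem \ref{ABTh01}. From \eqref{bound} the section $\eta \in \partial\varphi(x)$ stays bounded, and testing the equation by $(x-a)' = (x_1',\dots,x_n',0,\dots,0)$---after rewriting $h\cdot(x-a)' = (h-h_{\infty})\cdot(x-a)' + \frac{d}{dt}\bigl(h_{\infty}\cdot(x-a)\bigr)$ to absorb the non-decaying part of $h$---gives $\int_0^\infty\|x'\|^2\,dt < \infty$ using $a' \in L^2$ and $h - h_{\infty} \in L^2$. Boundedness makes $\omega(a,h,x_0)$ nonempty, and for any $x_{\infty} \in \omega(a,h,x_0)$ the time-shifted solutions $X_k(s) = x(t_k+s)$ converge uniformly to $x_{\infty}$ while their sections tend to $h_{\infty}$ in $L^2(0,1)$; testing against $Y_k(s) = (y_1,\dots,y_n,a_1(t_k+s),\dots,a_m(t_k+s)) \in K_{a}(t_k+s)$ and letting $k \to \infty$ shows, exactly through \eqref{sub}, that $h_{\infty} \in \partial(\varphi + I_{K_{a_{\infty}}})(x_{\infty}) = \partial\varphi(x_{\infty}) + \partial I_{K_{a_{\infty}}}(x_{\infty})$. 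Thus every $\omega$-limit point is an equilibrium of the limit inclusion.

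It remains to upgrade subsequential convergence to convergence of the entire trajectory. Fixing one equilibrium $x_{\infty} \in \omega(a,h,x_0)$ with $h_{\infty} = \eta_{\infty} + \xi_{\infty}$, $\eta_{\infty} \in \partial\varphi(x_{\infty})$, $\xi_{\infty} \in \partial I_{K_{a_{\infty}}}(x_{\infty})$, I would differentiate $\tfrac12\|x(t) - x_{\infty}\|^2$. Monotonicity of $\partial\varphi$ makes $(\eta - \eta_{\infty})\cdot(x - x_{\infty}) \ge 0$, while Lemma \ref{Ind-sub} reduces the indicator contribution to $\sum_j(\xi_{n+j} - \xi_{\infty,n+j})(a_j(t) - a_{\infty,j})$, which together with $(h - h_{\infty})\cdot(x - x_{\infty})$ is an $L^1(0,\infty)$ function of $t$: indeed $\|h-h_{\infty}\| \in L^1$ and $x$ is bounded, and $\|\xi\| \le \|h\| + \|x'\| + \|\eta\|$ pairs with $\|a - a_{\infty}\| \in L^1 \cap L^2$ (the $L^2$ membership following from $a - a_{\infty} \in L^1 \cap L^{\infty}$), so that $\int_0^\infty\|x'\|\,\|a-a_{\infty}\|\,dt \le \|x'\|_{L^2}\|a-a_{\infty}\|_{L^2} < \infty$. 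Hence $\tfrac12\|x(t)-x_{\infty}\|^2$ minus the integral of this $L^1$ remainder is non-increasing, so $\lim_{t\to\infty}\|x(t)-x_{\infty}\|$ exists; since it vanishes along $\{t_k\}$ it must be $0$, giving $x(t) \to x_{\infty}$ and, a posteriori, uniqueness of the limit.
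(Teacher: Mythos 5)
Your proof is correct, and its first two stages essentially coincide with the paper's own argument. For boundedness, the paper also compares $x$ with a stationary point $z_{\infty}$ satisfying $h_{\infty}\in\partial\varphi(z_{\infty})+\partial I_{K_{a_{\infty}}}(z_{\infty})$ via Theorem \ref{Dependence} (your detour through Proposition \ref{GB01} to produce a bounded reference trajectory is harmless but unnecessary: the constant-in-time solution $z_{\infty}$ itself is what the paper uses), arriving at the same self-referential inequality $Z(T)\le C_0\bigl(Z(T)^{(p-1)/2}+1\bigr)$, whose boundedness in $T$ is exactly where $p<3$ is consumed (all you need from your set $\{S\ge 0:\,S^2\le C_1+C_2S^{p-1}\}$ is boundedness, not that it is an interval); and your characterization of every $\omega$-limit point as a solution of $\partial\varphi(x_{\infty})+\partial I_{K_{a_{\infty}}}(x_{\infty})\ni h_{\infty}$, via the shifted solutions $X_k$, the $L^2(0,1)$-convergence of the sections to $h_{\infty}$, and the comparison elements $Y_k$, is identical to the paper's, including the bound $\int_0^{\infty}\|x'\|^2\,dt<\infty$ obtained by testing with $(x-a)'$ and isolating $h_{\infty}\cdot(x-a)'$ as an exact derivative. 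Where you genuinely diverge is the final upgrade from subsequential to full convergence. The paper applies Theorem \ref{Dependence} a second time, with initial time $t_k$, comparing $x$ against the stationary solution $x_{\infty}$ of (P)$_{a_{\infty},h_{\infty},x_{\infty}}$; the tails $\int_{t_k}^{\infty}\|a-a_{\infty}\|\,dt$ and $\int_{t_k}^{\infty}\|h-h_{\infty}\|\,dt$ vanish, so the whole trajectory converges. You instead run a Lyapunov-type argument: differentiate $\tfrac12\|x(t)-x_{\infty}\|^2$, discard $(\eta-\eta_{\infty})\cdot(x-x_{\infty})\ge 0$ by monotonicity of $\partial\varphi$, and verify that the remaining terms are in $L^1(0,\infty)$ — which requires the extra observations $\|\xi\|\le\|h\|+\|x'\|+\|\eta\|$, $x'\in L^2$, and $a-a_{\infty}\in L^1\cap L^{\infty}\subset L^2$, precisely because $\xi(t)$ and $\xi_{\infty}$ are subgradients of indicators of \emph{different} constraint sets, so no monotonicity is available for that cross term and it must be estimated by hand (a point you handle correctly). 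Both routes are valid: the paper's is shorter and showcases the continuous-dependence theorem doing the work twice (which its closing remark emphasizes as the novelty over the abstract literature in \cite{F-M-K,I-Y-K,S-I-Y-K}), while yours is the more classical, self-contained monotone-operator argument at the cost of some additional bookkeeping.
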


\begin{proof}
Let $z _{\infty } \in \R ^V $ satisfy
$h _{\infty } \in \partial \varphi (z _{\infty } ) + \partial I _{ K_{a _ \infty }} (z _{\infty } )$,
which means that 
$ z _{\infty }$ is a solution to (P)$_{a _ {\infty} , h _{\infty} , z _{\infty }} $.
By Theorem \ref{Dependence} 
with $ (a ^2 , h ^2 , x ^2 _ 0 ) = (a _ {\infty} , h _{\infty } ,z _{\infty })$, 
we obtain 
 \begin{equation*}
\sup _{0\leq t \leq T } 
 \LC \sum_{i=1}^{n} |x_i (t) - z _{\infty i } | ^2  \RC ^{1/2}
\leq 
C_0 
\LC  
\sup _{0\leq t \leq T } \| x  (t) \| ^{\frac{p-1}{2} } +
1  
\RC ,
\end{equation*}
where $C_ 0 $ is a general constant independent of $T > 0 $.
%
%
%
Then when $p=1$,
we can obtain $ \sup _{0 \leq t < \infty } \| x(t )\| < \infty  $.
Otherwise, 
let $Z (T) := \sup _{0 \leq t \leq T} 
\LC \sum_{i=1}^{n} | x _i (t) - z _{\infty  i }| ^2  \RC ^{1/2 }  $,
which satisfies $Z(T) \leq C_0 (Z (T ) ^{ \frac{p-1}{2}  } +1 )$ from above.
Hence 
if $1<  p < 3$,
we obtain 
$Z(T) \leq \max \{ (C_0 + 1) ^{2 / (3-p  )} , C _ 0 ^{2/ (p-1 ) } \}$.
By the arbitrariness of $T$,
it follows that 
$ \sup _{0 \leq t < \infty } \| x(t )\| < \infty  $,
which leads to 
$ \sup _{0 \leq t < \infty } \| \eta (t ) \| < \infty  $.
Testing 
(P)$_{a , h ,x_ 0 } $ by $(x -a ) '$, we have 
\begin{align*}
&\| x ' (t) -  a' (t) \| ^2 + \frac{d}{dt} \varphi (x(t) ) \\
& \leq
\| \eta (t ) \| \| a' (t) \|  
+\| h (t) - h_{\infty} \| \| x ' (t) -  a' (t) \| + h _{\infty } \cdot  ( x (t) -  a(t) )'
\end{align*}
Here $ \int_{0}^{T} h _{\infty } \cdot  ( x (t) -  a(t) )' dt 
=
  h _{\infty } \cdot  ( x (T) -  a(T) - x_ 0 + a(0) ) 
$
is uniformly bounded by \eqref{A2} and $\sup _{ 0\leq t < \infty } \| x (t) \| < \infty $.
Hence $\int_{0}^{\infty } \| x ' (t) \| ^2 dt  < \infty $ also holds.

Fix $x_{\infty } \in \omega (a , h ,x_ 0)$ 
and let $ \{ t _ k \} _{k\in \N }$ satisfy $t _k  \to \infty $ and 
$x (t _k ) \to x _{\infty }$ as $k\to \infty $.
As above, define
$X _k  := x (t _k + \cdot ) \in W ^{1,2} ( 0,1 ; \R ^V ) $,
which is the solution to 
\begin{equation*}
 X ' _k (s) + \partial \varphi (X_k (s)) + \partial I _{K _{a }(t_ k +s )} (X_k (s) ) 
- h_{\infty }\ni h(t_k + s) - h_{\infty } . 
\end{equation*}
Then the section $ \zeta  _k (s) \in \partial \varphi (X_k (s)) + \partial I _{K _{a }(t_ k +s )} (X_k (s) ) $
of this inclusion satisfies
\begin{equation*}
\LC \int_{0}^{1} \| \zeta  _k (s) - h_{\infty} \| ^2 ds \RC ^{1/2 } = 
\LC \int_{t _ k }^{t _ k + 1} \|  x'  (s) -  h   (s) + h_{\infty } \| ^2 ds \RC ^{1/2 } 
\to 0 ,
\end{equation*}
namely, $\zeta  _ k \to h _{\infty }$ in $ L ^2 (0,1 ;\R ^V )$.
For every $y_1 , \ldots , y _ n \in \R  $ we can obtain
\begin{equation*}
h_{\infty } \cdot (y_{\infty} - x _{\infty}) \leq
 \LC  \varphi ( y _{\infty } ) +I _{ K _{a _ \infty}} (y _{\infty } ) \RC 
- \LC  \varphi ( x _{\infty } ) + I _{ K _{a _ \infty}} (x _{\infty } ) \RC ,
\end{equation*}
where
$y _{\infty } = (y_1 , \ldots , y _ n , a_{\infty  1} , \ldots , a_{\infty  m} )$.
Therefore $y \mapsto \varphi (y) + I _{ K _{a _ \infty}} (y) - h _{\infty } \cdot y$
attains its minimum at $x _{\infty }$,
 which implies that
 $\partial \varphi (x_{\infty}) + \partial I_{K_{a _{\infty}}} (x_{\infty } ) \ni h _{\infty }$.

Again, choose  $x_{\infty } \in \omega (a , h ,x_ 0)$ arbitrarily  
and let $ \{ t _ k \} _{k\in \N }$ satisfy $x (t _k ) \to x _{\infty }$.
Since $x_{\infty }$ is a solution to  
 (P)$_{a _ {\infty} , h _{\infty} , x _{\infty }} $,
we can derive from Theorem \ref{Dependence} with $t = 0 $ replaced by $t = t_ k $
\begin{align*}
&\LC \sum_{i=1}^{n} |x_{i} (t + t_k ) - x_{\infty i} | ^2  \RC ^{1/2}
\leq \LC \sum_{i=1}^{n} |x_{i} ( t_k ) - x_{\infty i} | ^2  \RC ^{1/2} \\
&\hspace{3cm} + C _0 \LC  \int_{t_ k }^{\infty } \| a (t) - a _{\infty } \| dt  \RC ^{1/2} 
+ \int_{t_ k }^{\infty } \| h (t) - h _{\infty } \| dt 
\end{align*}
for every $t > 0 $.
Hence   
$x(t) \to x_{\infty} $  
can be assured regardless of the choice of subsequences, that is,
the limit $x_{\infty }$ is determined uniquely.
\end{proof}

\begin{remark}
Under the same assumptions for $a $ and $h $ as in Theorem \ref{ABTh03},
we can use \cite[Theorem 2]{F-M-K}
and show the uniform boundedness of $\| x(t) \|$
without any restriction for $p \in [ 1 , \infty ) $
(see also \cite[Theorem 2.2]{I-Y-K} and \cite[Theorem 2.2]{S-I-Y-K}
for global boundedness of solution).
Hence we can derive Theorem \ref{ABTh03} for every $p \in [1 , \infty )$
by referring known abstract results.
We here mention that, however,
the convergence of whole sequence $\{ x(t) \}$ 
without extraction specific subsequence can be obtained  in our result
thanks to   the continuous dependence of solutions (Theorem \ref{Dependence}),
which can not be assured only by \cite{F-M-K, I-Y-K, S-I-Y-K}.
Moreover, we recall that 
the decay estimate \eqref{decay} in Theorem \ref{ABTh01} is derived 
from the  Poincar\'{e} type inequality (Theorem \ref{Poincare}), 
that is, the nature of the hypergraph Laplacian.
\end{remark}

\subsection*{Acknowedgements}
T. Fukao  is supported by 
 JSPS Grant-in-Aid for Scientific Research (C) (No.21K03309). 
M. Ikeda  is supported by JST CREST Grant (No.JPMJCR1913) and JSPS Grant-in-Aid for Young Scientists Research (No.19K14581).
S. Uchida is supported by JSPS Fund for the Promotion of Joint International Research 
(Fostering Joint International Research (B)) (No.18KK0073)
and Sumitomo Foundation Fiscal 2022 Grant for Basic Science Research Projects (No.2200250).


%
\address{
Fukao Takeshi\\
Department of Mathematics, \\
Kyoto University of Education, \\
1 Fujinomori, Fukakusa, \\
Fushimi-ku, Kyoto,  \\
612-8522, JAPAN.%
}
{fukao@kyokyo-u.ac.jp}
%
\address{
Masahiro Ikeda\\
Department of Mathematics, \\
Faculty of Science and Technology, \\   
Keio University, \\
3-14-1 Hiyoshi Kohoku-ku, Yokohama, \\
223-8522, JAPAN/\\
Center for Advanced Intelligence \\
Project, RIKEN, Tokyo, \\
103-0027, JAPAN.
}
{masahiro.ikeda@keio.jp/\\
masahiro.ikeda@riken.jp}
%
%
\address{
Shun Uchida\\
Department of Integrated Science and Technology, \\
Faculty of Science and Technology, \\ 
Oita University,\\
700 Dannoharu, Oita City, Oita Pref., \\
 870-1192, JAPAN.
}
{shunuchida@oita-u.ac.jp}
\end{document}